\newtheorem{theorem}{Theorem}[section]
\newtheorem{lemma}[theorem]{Lemma}%[section]
\newtheorem{example}[theorem]{Example}%[section]
\newtheorem{remark}[theorem]{Remark}%[section]
\newtheorem{corollary}[theorem]{Corollary}%[section]
\newtheorem{proposition}[theorem]{Proposition}
\newcommand{\lex}{\,\overrightarrow{\times}\,}
\newcommand{\dx}{\, \mbox{\rm d}}
\newcommand{\Aff}{\mbox{\rm Aff}}
\begin{document}
\title{Every  State on Interval Effect Algebra  is  Integral}
\author{Anatolij Dvure\v censkij}
\date{}%Jan. 5, 2010
\maketitle

\begin{center}
\footnote{Keywords: Effect algebra; Riesz Decomposition Property;
state; unital po-group; simplex; Bauer simplex

AMS classification:  81P15, 03G12, 03B50

The  author thanks  for the support by Center of Excellence SAS
-~Quantum Technologies~-,  ERDF OP R\&D Projects CE QUTE ITMS
26240120009 and meta-QUTE ITMS 26240120022, the grant VEGA No.
2/0032/09 SAV and by the Slovak Research and Development Agency
under the contract No. APVV-0071-06, Bratislava. }
\small{Mathematical Institute,  Slovak Academy of Sciences\\
\v Stef\'anikova 49, SK-814 73 Bratislava, Slovakia\\
E-mail: {\tt dvurecen@mat.savba.sk}, }
\end{center}

\begin{abstract}  We show that every state on an interval effect algebra
is an integral through some regular Borel probability measure
defined on the Borel $\sigma$-algebra of a compact Hausdorff
simplex. This is true for every effect algebra satisfying (RDP) or
for every  MV-algebra. In addition, we show that each state on an
effect subalgebra of an interval effect algebra $E$ can be extended
to a state on $E.$ Our method represents also every state on the set
of  effect operators of a Hilbert space as an integral.
\end{abstract}

\section{Introduction} %1

Effect algebras were defined by Foulis and Bennett in \cite{FoBe} in
order to axiomatize some measurements in quantum mechanics that are
modeled by POV-measures as partial algebras with a partially defined
addition, $+,$ an analogue of join of two orthogonal elements.
Effect algebras generalize Boolean algebras, orthomodular lattices
and posets, orthoalgebras as well as MV-algebras. Because Hilbert
space quantum mechanics ``lives" in a Hilbert space, $H,$ the system
of all Hermitian operators, $\mathcal B(H),$ the system of all
effect operators, $\mathcal E(H)$, i.e. the set of all Hermitian
operators between the zero operator and the identity, as well as the
system of all orthogonal projectors, $\mathcal P(H),$ are crucial
algebraic structures: $\mathcal B(H)$ is a po-group that is an
antilattice, [LuZa, Thm 58.4], i.e. only comparable elements have
join, $\mathcal E(H)$ is an effect algebra, and $\mathcal P(H)$ is a
complete orthomodular lattice. They are fundamental examples of
quantum structures; more on quantum structures see \cite{DvPu}.

We note that effect algebras are equivalent to D-posets defined by
K\^opka and Chovanec \cite{KoCh}, where a primary notion is
difference of comparable events.

An important subfamily of effect algebras are MV-algebras introduced
by Chang \cite{Cha} that model many-valued \L ukasiewicz logic. The
role of MV-algebras for effect algebras is analogous as that of
Boolean algebras for orthomodular posets: any orthomodular poset or
a lattice effect algebra can be covered by a system of mutually
compatible substructures that in the first case are Boolean
subalgebras whilst in the second case are MV-subalgebras.

Many useful effect algebras are intervals in some unital po-group
groups. Such examples are effect algebras with the Riesz
Decomposition Property (RDP for short), or unigroups or MV-algebras.

A state is an analogue of a probability measure for quantum
structures. Roughly speaking, it is a positive functional $s$ on an
effect algebra $E$ that preserves $+$ and $s(1)=1$ where $1$ is the
greatest element in $E.$  In general, an effect algebra may have no
state, but effect algebra with (RDP) or an interval one has at least
one state.

The system of all states is always a convex compact Hausdorff space
whose the set of extremal states is not necessarily compact. Anyway,
using some analysis of compact convex sets it is possible to show
that if $E$ is an interval effect algebra or even if it satisfies
(RDP) or an MV-algebra, then a state is an integral through a
regular Borel probability measure defined on the Borel
$\sigma$-algebra over a compact Hausdorff simplex. This result
generalizes the analogous results by \cite{Kro, Kro1} and Panti
\cite{Pan} proved for states on MV-algebras. We show also conditions
when, for every state $s$ on $E,$ there is a unique regular Borel
probability measure.

In addition, we generalize Horn--Tarski Theorem that says that every
state on a Boolean subalgebra of a Boolean algebra $B$ can be
extended to a state on $B.$ We show that every state on an effect
subalgebra of an interval effect algebra $E$ can be extended to a
state on $E.$

The article is organized as follows. Section 2 presents elements of
effect algebras, and states on effect algebras are recalled in
Section 3. Section 4 is devoted to affine continuous functions on a
convex compact Hausdorff topological space and Section 5 shows
importance of Choquet and Bauer simplices for our study.  Finally,
the main results are formulated and proved in Section 6. In
Conclusion, we summarize our results.

\section{Elements of Effect Algebras}%2

An {\it effect algebra} is by \cite{FoBe} a partial algebra $E =
(E;+,0,1)$ with a partially defined operation $+$ and two constant
elements $0$ and $1$  such that, for all $a,b,c \in E$,
\begin{enumerate}

\item[(i)] $a+b$ is defined in $E$ if and only if $b+a$ is defined, and in
such a case $a+b = b+a;$

 \item[(ii)] $a+b$ and $(a+b)+c$ are defined if and
only if $b+c$ and $a+(b+c)$ are defined, and in such a case $(a+b)+c
= a+(b+c);$

 \item[(iii)] for any $a \in E$, there exists a unique
element $a' \in E$ such that $a+a'=1;$

 \item[(iv)] if $a+1$ is defined in $E$, then $a=0.$
\end{enumerate}

We recommend \cite{DvPu} for more details on effect algebras.

If we define $a \le b$ if and only if there exists an element $c \in
E$ such that $a+c = b$, then $\le$ is a partial order on $E$, and we
denote $c:=b-a.$ Then $a' = 1 - a$ for any $a \in E.$

We will assume that $E$ is not degenerate, i.e. $0\ne 1$.

Let $E$ and $F$ be two effect algebras. A mapping $h:\ E \to F$ is
said to be a {\it homomorphism} if (i) $h(a+b) = h(a) + h(b)$
whenever $a+b$ is defined in $E$, and (ii) $h(1) =1$. A bijective
homomorphism $h$ such that $h^{-1}$ is a homomorphism is said to be
an {\it isomorphism} of $E$ and $F$.

Let ${\mathcal E}$ be a system of $[0,1]$-valued functions on
$\Omega\ne \emptyset$ such that (i) $1 \in {\mathcal E}$, (ii) $f
\in {\mathcal E}$ implies $1-f \in {\mathcal E}$, and (iii) if $f,g
\in {\mathcal E}$ and $f(\omega) \le 1 -g(\omega)$ for any $\omega
\in \Omega$, then $f+g \in {\mathcal E}$. Then ${\mathcal E}$ is an
effect algebra of $[0,1]$-valued functions, called an {\it
effect-clan}, that is not necessarily a Boolean algebra nor an
MV-algebra.

We say that an effect algebra  $E$ satisfies  the  {\it Riesz
Decomposition Property}, (RDP) for short, if $x \le y_1 + y_2$
implies that there exist two elements $x_1, x_2 \in E$ with $x_1 \le
y_1 $ and $x_2 \le y_2$ such that $x = x_1 + x_2$.

We recall that  $E$ has (RDP) if and only if, \cite[Lem.
1.7.5]{DvPu}, $x_1 + x_2 = y_1 + y_2$ implies there exist four
elements $c_{11}, c_{12}, c_{21}, c_{22} \in E$ such that $x_1 =
c_{11} + c_{12},$ $x_2 = c_{21} + c_{22},$ $y_1 = c_{11} + c_{21},$
and $y_2 = c_{12} + c_{22}.$

A partially ordered Abelian group $(G;+,0)$ (po-group in short) is
said to satisfy  {\it interpolation} provided  given
$x_1,x_2,y_1,y_2$ in $G$ such that $x_1,x_2\leq y_1,y_2$  there
exists $z$ in $G$ such that $x_1,x_2\leq z\leq y_1,y_2.$  We say
also that $G$ is an {\it interpolation group.}

A po-group $(G,u)$ with strong unit $u$ (= order unit, i.e. given $g
\in G,$ there is an integer $n\ge 1$ such that $g\le n$) is said to
be a {\it unital po-group}. If the set
$$
\Gamma(G,u) :=\{g \in G:\, 0 \le g \le u\} \eqno(2.1)
$$
is endowed with the restriction of the group addition $+$, then
$(\Gamma(G,u); +,0,u)$ is an effect algebra.

Ravindran \cite{Rav} (\cite[Thm 1.7.17]{DvPu}) showed that every
effect algebra with (RDP) is of the form (2.1) for some
interpolation unital po-group $(G,u)$:

\begin{theorem}\label{th:2.1}  Let $E$ be an effect algebra with
{\rm (RDP)}. Then there exists a unique (up to isomorphism of
Abelian po-groups with strong unit)  interpolation group $(G,u)$
with strong unit such that $\Gamma(G,u)$ is isomorphic with $E$.
\end{theorem}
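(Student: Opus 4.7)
The plan is to construct a universal enveloping unital po-group $(G,u)$ for $E$ and then verify each of the asserted properties (interpolation, $\Gamma(G,u)\cong E$, uniqueness) in turn. First I would form the free commutative monoid $F(E)$ on the underlying set of $E$, written as finite formal sums $a_1\oplus\cdots\oplus a_n$, and quotient by the congruence $\sim$ generated by the relation $(a\oplus b)\sim (a+b)$ whenever $a+b$ is defined in $E$. Call the resulting monoid $M(E)$. Then form the Grothendieck group $G=M(E)-M(E)$, take $G^+$ to be the image of $M(E)$, and set $u:=[1]$. Because every $a\in E$ satisfies $a\le 1$ in $E$, every class in $M(E)$ is dominated by some $n[1]=nu$, so $u$ will be a strong unit once $G^+$ is shown to be a genuine positive cone.

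The technical heart is to show that $M(E)$ is cancellative, so that the canonical map $M(E)\to G$ is injective and $G^+\cap(-G^+)=\{0\}$. This is precisely where (RDP) must be used in its refinement form (Lemma 1.7.5 of \cite{DvPu}, quoted in the excerpt): two formal sums $a_1\oplus\cdots\oplus a_m$ and $b_1\oplus\cdots\oplus b_n$ represent the same class in $M(E)$ iff they admit a common refinement, i.e.\ there exist $c_{ij}\in E$ with $a_i=\sum_j c_{ij}$ and $b_j=\sum_i c_{ij}$. From this description one verifies cancellation by pasting a witness refinement on the common summand. I expect this refinement/cancellation step to be the main obstacle; everything else is a consequence.

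Once $M(E)$ embeds in $G$, the map $\iota\colon E\to\Gamma(G,u)$, $a\mapsto [a]$, is manifestly an effect-algebra homomorphism. Injectivity follows from cancellativity together with the observation that $[a]=[b]$ in $M(E)$ forces $a=b$ by considering the trivial refinement. For surjectivity, take $g\in G$ with $0\le g\le u$; write $g$ as a class $[a_1\oplus\cdots\oplus a_n]\in M(E)$; since $a_1\oplus\cdots\oplus a_n\le u=[1]$, refinement produces elements realising the sum actually inside $E$, so $g=[a]$ for a single $a\in E$. For interpolation in $G$: given $x_1,x_2\le y_1,y_2$, reduce to positive differences $y_j-x_i\in G^+$, represent them in $M(E)$ by elements of $E$, and apply (RDP) repeatedly to extract an interpolant; this is the standard argument showing that $G$ is an interpolation group whenever $E$ satisfies (RDP).

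Uniqueness is then a universal-property argument. Suppose $(G',u')$ is any interpolation unital po-group with an isomorphism $\varphi\colon E\to\Gamma(G',u')$. Because $E$ generates $G$ as an abelian group (every element of $G$ is a difference of sums of elements of $E$), the assignment $a\mapsto\varphi(a)$ extends uniquely to a group homomorphism $\Phi\colon G\to G'$; it sends $u$ to $u'$, is order-preserving because it sends $G^+$ into $(G')^+$, and is surjective because $\Gamma(G',u')$ generates $G'$ (as $u'$ is a strong unit and $G'$ is directed). Injectivity of $\Phi$ follows from the fact that $\Phi$ restricts to the bijection $\varphi$ on $\Gamma(G,u)$ together with the observation that any $g\in\ker\Phi$ can be written as $g=a-b$ with $a,b\in\Gamma(G,u)$, forcing $\varphi(a)=\varphi(b)$ and hence $a=b$. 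Thus $\Phi$ is the required isomorphism of unital po-groups, completing the proof.
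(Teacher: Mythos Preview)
The paper does not actually prove this theorem: it is stated with a citation to Ravindran \cite{Rav} and \cite[Thm 1.7.17]{DvPu}, and used thereafter as a black box. So there is no proof in the paper to compare against; your proposal is an attempt to reprove a result the paper merely quotes.

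Your outline is the standard Ravindran construction (word monoid on $E$, refinement characterisation of equality via (RDP), Grothendieck group), and the existence part is essentially correct as sketched. However, your uniqueness argument has a genuine gap. You write that any $g\in\ker\Phi$ can be expressed as $g=a-b$ with $a,b\in\Gamma(G,u)$; this is false in general. Elements of $G$ are differences of \emph{finite sums} of elements of $E$, not differences of single elements, so you only get $g=\sum_i a_i-\sum_j b_j$ with $a_i,b_j\in E$. From $\Phi(g)=0$ you obtain $\sum_i\varphi(a_i)=\sum_j\varphi(b_j)$ in $G'$, and to conclude $\sum_i a_i=\sum_j b_j$ in $G$ you must use interpolation \emph{in $G'$}: refine the equality in $G'$ into a matrix $(d_{ij})$ with $d_{ij}\in\Gamma(G',u')$, pull back through the isomorphism $\varphi^{-1}$ to obtain $c_{ij}\in E$, and then observe that this refinement witnesses $[\bigoplus_i a_i]=[\bigoplus_j b_j]$ in $M(E)$. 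Alternatively, and more cleanly, one shows that both $(G,u)$ and $(G',u')$ satisfy the same universal property (every effect-algebra morphism out of $E$ into an interval $\Gamma(H,v)$ extends uniquely to a unital po-group morphism), from which the isomorphism follows by the usual Yoneda-style argument; proving that $(G',u')$ has this property again uses interpolation in $G'$ to decompose positive elements into sums from $\Gamma(G',u')$. Either way, the step you wrote does not go through as stated.
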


Moreover, the category of effect algebras with {\rm (RDP)} is
categorically equivalent with the category of unital po-groups with
interpolation; the categorical equivalence is given by the functor:
$\Gamma: (G,u) \mapsto \Gamma(G,u).$

An effect algebra of the form $\Gamma(G,u)$ for some element $u\ge
0$ or if it is isomorphic with $\Gamma(G,u)$  is called also an {\it
interval effect algebra}. For example,  the system, ${\mathcal
E}(H),$ of all Hermitian operators, $A,$ of a Hilbert space $H$
(real, complex or quaternionic) such that $O \le A\le I,$ where the
ordering of Hermitian operators is defined by the property: $A \le
B$ iff $(A\phi,\phi) \le (B\phi,\phi)$ for all $\phi \in H,$ is an
interval effect algebra such that  ${\mathcal E}(H)
=\Gamma({\mathcal B}(H),I).$ We note that $\mathcal E(H)$ is without
(RDP).

Let $u$ be a positive element of an Abelian po-group $G.$ The
element $u$ is said to be {\it generative} if every element $g \in
G^+$ is a group sum of finitely many elements of $\Gamma(G,u),$ and
$G = G^+-G^+.$ Such an element is  a strong unit \cite[Lem
1.4.6]{DvPu} for $G$ and the couple $(G,u)$ is said to be a {\it
po-group with generative strong unit}.  For example, if $u$ is a
strong unit of an interpolation po-group $G,$ then $u$ is
generative.  The same is true for $I$ and $\Gamma({\mathcal
B}(H),I).$

Let $E$ be an effect algebra and $H$ be an Abelian (po-) group. A
mapping $p: G \to H$ that preserves $+$ is called an $H$-{\it valued
measure} on $E.$

\begin{remark}\label{re:amb} {\rm
If $E$ is an interval effect algebra, then there is a po-group $G$
with a generative strong unit $u$ such that $E \cong \Gamma(G,u)$
and every $H$-valued measure $p:\Gamma(G,u) \to H$ can be extended
to a group-homomorphism $\phi$ from $G$ into $H.$ If $H$ is a
po-group, then $\phi$ is a po-group-homomorphism. Then $\phi$ is
unique and $(G,u)$ is also unique up to isomorphism of unital (po-)
groups, see \cite[Cor 1.4.21]{DvPu}; the element $u$ is said to be a
{\it universal strong unit} for $\Gamma(G,u)$ and the couple $(G,u)$
is said to be a {\it unigroup}. In particular, the identity operator
$I$ is a universal strong unit for $\Gamma({\mathcal B}(H),I),$
\cite[Cor 1.4.25]{DvPu}, similarly for $\Gamma(A,I),$ where $A$ is a
von Neumann algebra on $H.$}
\end{remark}

Also in this case, the category of interval effect algebras is
categorically equivalent with the category of unigroups $(G,u)$ with
the functor $\Gamma: (G,u)\mapsto \Gamma(G,u).$

We notify that also a non-commutative version of effect algebras,
called {\it pseudo effect algebras}, was recently introduced in
\cite{DvVe1,DvVe2}; in some important cases they are also intervals
but in unital po-groups that are not necessarily Abelian.

A very important family of effect algebras is the family of
MV-algebras, which were introduced by Chang \cite{Cha}:

An {\it MV-algebra} is an algebra $(M;\oplus,^*,0)$ of signature
$\langle 2,1,0\rangle,$ where $(M;\oplus,0)$ is a commutative monoid
with neutral element $0$, and for all $x,y \in M$
\begin{enumerate}
\item[(i)]  $(x^*)^*=x,$
\item[(ii)] $x\oplus 1 = 1,$ where $1=0^*,$
\item[(iii)] $x\oplus (x\oplus y^*)^* = y\oplus (y\oplus x^*)^*.$
\end{enumerate}

We define also an  additional total operation $\odot$ on $M$ via
$x\odot y:= (x^*\oplus y^*)^*.$  For more on MV-algebras, see the
monograph \cite{CDM}.

If we define a partial operation $+$ on an MV-algebra $M$ in such a
way that $a+b$ is defined in $M$ if and only if $a \le b^*$ (or
equivalently $a\odot b=0$) and we set $a+b:=a\oplus b$, then
$(M;+,0,1)$ is a lattice ordered effect algebra with (RDP), where
$a':=a^*.$ According to \cite{Mun}, every MV-algebra is isomorphic
with $(\Gamma(G,u); \oplus,^*, 0)$, where $(G,u)$ is an $\ell$-group
(= lattice ordered group) with strong unit and
$$
a\oplus b =(a+b)
\wedge u,\quad a^* = u-a,\quad a,b \in \Gamma(G,u).
$$

It is worthy recalling   that a lattice ordered effect algebra $E$
with (RDP) can be converted into an MV-algebra (in such a way that
both original $+$ and the partial addition induced  from the
MV-structure coincide); we define $x\oplus y:= x+ (y\wedge(x\wedge
y)')$ and $x^*=x',$ see \cite[Thm 1.8.12]{DvPu}.

\section{States on Effect Algebras}%3

We present some notions and properties about states on effect
algebras. For further reading we recommend \cite{DvPu, Dvu}.

A {\it state} on an effect algebra $E$ is any mapping $s: \ E \to
[0,1]$ such that (i) $s(1) = 1$, and (ii) $s(a+b) = s(a) + s(b)$
whenever $a+b$ is defined in $E$.  We denote by ${\mathcal S}(E)$
the set of all states on $E$. It can happen that ${\mathcal S}(E)$
is empty, see e.g. \cite[Exam. 4.2.4]{DvPu}. In important cases, for
example, when $E$ satisfies (RDP) or, more generally, if $E$ is an
interval effect, ${\mathcal S}(E)$ is nonempty, see below.
${\mathcal S}(E)$ is always a convex set, i.e. if $s_1, s_2$ are
states on an effect algebra $E$ and $\lambda \in [0,1],$ then $s =
\lambda s_1 + (1-\lambda)s_2$ is a also a state on $E.$ A state $s$
is said to be {\it extremal} if $s = \lambda s_1 + (1-\lambda)s_2$
for $\lambda \in (0,1)$ implies $s = s_1 = s_2.$ By $\partial_e
{\mathcal S}(E)$ we denote the set of all extremal states of
${\mathcal S}(E)$. We say that a net of states, $\{s_\alpha\}$, on
$E$ {\it weakly converges} to a state $s$ on $E$ if $s_\alpha(a) \to
s(a)$ for any $a \in E$. In this topology, ${\mathcal S}(E)$ is a
compact Hausdorff topological space and every state on $E$ belongs
to  the weak closure of the convex hull of the extremal states, as
the Krein-Mil'man Theorem states, see e.g. \cite[Thm 5.17]{Goo}.

Let $(G,u)$ be an Abelian po-group with strong unit. By a {\it
state} on $(G,u)$ we mean any mapping $s:\ G \to \mathbb R$ such
that (i) $s(g+h) = s(g)+ s(h)$ for all $g,h \in G,$ (ii) $s(G^+)
\subseteq \mathbb R^+$, and (iii) $s(u) = 1.$ In other words, a
state on $(G,u)$ is any po-group homomorphism from $(G,u)$ into the
po-group $(\mathbb R,1)$ that preserves fixed strong units. We
denote by ${\mathcal S}(G,u)$ the set of all states and by
$\partial_e {\mathcal S}(G,u)$ the set  of all extremal states,
respectively,  on $(G,u)$. According to \cite[Cor. 4.4]{Goo},
${\mathcal S}(G,u) \ne \emptyset$ whenever $u >0.$ In a similar way
as for effect algebras, we define the weak convergence of states on
$(G,u)$, and analogously, ${\mathcal S}(G,u)$ is a compact Hausdorff
topological space and every state on $(G,u)$ is a weak limit of a
net of convex combinations from $\partial_e {\mathcal S}(G,u).$

If $E= \Gamma(G,u)$, where $(G,u)$ is an interpolation po-group or a
unigroup, then due to the categorical equivalence, every state on
$E$ can be extended to a unique state on $(G,u)$, and conversely,
the restriction of any state on $(G,u)$ to $E$ gives a state on $E$.
Therefore, if $E$ satisfies (RDP) or more generally if $E$ is an
interval effect algebra, ${\mathcal S}(E) \ne \emptyset$. In
addition, extremal states on $E$ are the restrictions of extremal
states on $(G,u)$, the space ${\mathcal S}(E)$ is affinely
homeomorphic with ${\mathcal S}(G,u)$, and the space $\partial_e
{\mathcal S}(E)$ is homeomorphic with $\partial_e {\mathcal
S}(G,u)$. We recall that a mapping from one convex set onto another
convex set is {\it affine} if it preserves convex combinations.

We recall that if $E$ is an MV-algebra, then a state $s$ on $E$ is
extremal if and only if $s(x\wedge y) =\min\{s(x),s(y)\}$, $x,y \in
E$, therefore $\partial_e {\mathcal S}(E)$ is compact (see, e.g.
\cite[Prop. 6.1.19]{DvPu},  \cite[Thm 12.18]{Goo}).

On the other hand,  if $E$ is an effect algebra with (RDP), then
$\partial_e {\mathcal S}(E)$ is not necessarily compact,   see for
example \cite[Exam. 6.10]{Goo}.

We say that a system of states, ${\mathcal S},$ on an effect algebra
$E$ is  {\it order determining} if $s(a)\le s(b)$ for any $s \in
{\mathcal S}$ yields $a\le b.$

Let $H$ be a separable Hilbert space (real, complex or quaternionic)
with an inner product $(\cdot,\cdot)$, and ${\mathcal L}(H)$ be the
system of all closed subspaces of $H.$ Then ${\mathcal L}(H)$ is a
complete orthomodular lattice \cite{Dvu}. Given a unit vector $\phi
\in H$, let
$$
p_\phi(M):= (P_M\phi,\phi),\quad M \in {\mathcal L}(H),
$$
where $P_M$ is the orthogonal projector of $H$ onto $M.$ Then
$p_\phi$ is a $\sigma$-additive state on ${\mathcal L}(H),$ called a
pure state. The system of all pure states is order determining.  If
$T$ is a positive Hermitian operator of finite trace (i.e.
$\sum_i(T\phi_i,\phi_i) <\infty$ for any orthonormal basis
$\{\phi_i\}$ of $H$, and we define $\mbox{tr}(T)
:=\sum_i(T\phi_i,\phi_i)$), then
$$
s_T(M):=\mbox{tr}(TP_M),\quad M \in {\mathcal L}(H),\eqno(3.3)
$$
is a $\sigma$-additive state, and according to famous Gleason's
Theorem, \cite{Gle}, \cite[Thm 3.2.24]{Dvu}, if $\dim H\ge 3,$ for
every $\sigma$-additive state $s$ on ${\mathcal L}(H)$, there is a
unique positive Hermitian operator $T$ of finite trace such that
$s=s_T.$

If now $s$ is a finitely additive state on ${\mathcal L}(H),$ then
due to the Aarnes Theorem, \cite[Thm 3.2.28]{Dvu}, $s$ can be
uniquely decomposed  to the form
$$ s = \lambda s_1 +(1-\lambda)s_2,$$
where $s_1$ is a $\sigma$-additive state and $s_2$ is a finitely
additive state vanishing on each finite-dimensional subspace of $H.$

Moreover, a pure state $p_\phi$ is an extreme point of the set of
$\sigma$-additive states, as well as it is an extremal state of
${\mathcal L}(H).$ Other extremal states of ${\mathcal L}(H)$ vanish
on each finite-dimensional subspace of $H.$  Since every state on
${\mathcal L}(H)$ can be  extended into a unique state on ${\mathcal
B}(H),$ see e.g. \cite[Thm 3.3.1]{Dvu},  the state spaces ${\mathcal
S}({\mathcal L}(H)),$ ${\mathcal S}({\mathcal E}(H))$ and ${\mathcal
S}({\mathcal B}(H))$ are mutually affinely homeomorphic whenever
$3\le \dim H\le \aleph_0.$

It is worthy to recall that ${\mathcal L}(H)$ is isomorphic to the
set $\mathcal P(H)$ of all orthogonal projectors of $H.$

We say that a po-group $G$ is {\it Archimedean} if for $x,y \in G$
such that $nx \le y$ for all positive integers $n \ge 1$, then $x
\le 0.$  It is possible to show that a unital po-group $(G,u)$  is
Archimedean iff $G^+=\{g \in G:\ s(g) \ge 0$ for all $ s \in
{\mathcal S}(G,u)\},$  \cite[Thm 4.14]{Goo}.  We have the following
characterization of interval effect algebras coming from Archimedean
po-groups.

\begin{proposition}\label{pr:4.2}  Let $E=\Gamma(G,u),$ where is
$(G,u)$ is a unigroup. The following statements are equivalent:

\begin{enumerate}
\item[{\rm (i)}]   ${\mathcal S}(E)$ is order determining.

\item[{\rm (ii)}] $E$ is isomorphic to some effect-clan.

\item[{\rm (iii)}] $G$ is Archimedean.

\end{enumerate}

\end{proposition}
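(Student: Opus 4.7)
The plan is to establish the cycle (iii) $\Rightarrow$ (i) $\Rightarrow$ (ii) $\Rightarrow$ (iii), exploiting throughout the bijection $\mathcal{S}(E)\cong \mathcal{S}(G,u)$ furnished by the unigroup property (Section~3). For (iii) $\Rightarrow$ (i), I would invoke the characterization recalled just before the proposition: in an Archimedean unital po-group, $g\in G^+$ iff $s(g)\ge 0$ for every $s\in \mathcal{S}(G,u)$. Applied to $g=b-a$ with $a,b\in E$, this at once yields the order-determination on $E$.

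For (i) $\Rightarrow$ (ii), set $\Omega:=\mathcal{S}(E)$ and consider the evaluation map $\Psi:E\to [0,1]^\Omega$ defined by $\Psi(a)(s):=s(a)$. The image $\Psi(E)$ satisfies the clan axioms: $\Psi(1)=1$; $\Psi(a')=1-\Psi(a)$; and if $\Psi(a)+\Psi(b)\le 1$ pointwise, then $s(a)\le s(b')$ for every $s$, so by (i) we obtain $a\le b'$, whence $a+b$ is defined in $E$ and $\Psi(a+b)=\Psi(a)+\Psi(b)\in \Psi(E)$. Order-determination also forces $\Psi$ to be injective, giving $E\cong \Psi(E)$ as effect algebras.

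For (ii) $\Rightarrow$ (iii), identify $E$ with a clan $\mathcal{E}\subseteq [0,1]^\Omega$. The inclusion $\mathcal{E}\hookrightarrow \mathbb{R}^\Omega$ is an $\mathbb{R}^\Omega$-valued measure on $E$, which by Remark~\ref{re:amb} extends uniquely to a po-group homomorphism $\phi:G\to \mathbb{R}^\Omega$ with $\phi(u)=1$. Since $\mathbb{R}^\Omega$ is Archimedean, it suffices to show $\phi$ is an order-embedding; then $G$ embeds as a po-subgroup of $\mathbb{R}^\Omega$ and inherits the Archimedean property. Setting $\tilde{G}:=\phi(G)$, the plan is to verify that $(\tilde{G},1)$ is itself a unigroup for $\mathcal{E}$---in particular that $\Gamma(\tilde{G},1)=\mathcal{E}$ and that $1$ is generative for $\tilde{G}$---so that uniqueness of the unigroup (Remark~\ref{re:amb}) forces $(G,u)\cong (\tilde{G},1)$ via $\phi$. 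The main obstacle is the inclusion $\Gamma(\tilde{G},1)\subseteq \mathcal{E}$: given $f=\sum_i a_i-\sum_j b_j\in \tilde{G}$ with $0\le f\le 1$ pointwise and all $a_i,b_j\in \mathcal{E}$, one has to prove $f\in \mathcal{E}$. A useful observation is that in any clan, comparable pairs can be subtracted, since $a\le b$ pointwise with $a,b\in \mathcal{E}$ gives $b-a=1-(a+(1-b))\in \mathcal{E}$; one can therefore try to cancel pointwise-comparable pairs $(a_i,b_j)$, but in the absence of (RDP) such pairs need not exist, and this is where the real structural work of the proof lies.
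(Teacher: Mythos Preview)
Your cycle (iii)$\Rightarrow$(i)$\Rightarrow$(ii)$\Rightarrow$(iii) is exactly the paper's cycle (the paper runs it as (i)$\Rightarrow$(ii)$\Rightarrow$(iii)$\Rightarrow$(i), but the content is identical). Your arguments for (iii)$\Rightarrow$(i) and (i)$\Rightarrow$(ii) match the paper's essentially line for line: the first is the Goodearl characterization $G^+=\{g:s(g)\ge 0\ \forall s\}$, and the second is the evaluation map $a\mapsto \hat a$ into $[0,1]^{\mathcal S(E)}$, with order-determination supplying injectivity and closure of the image under the clan operations.

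For (ii)$\Rightarrow$(iii) you and the paper again take the same route: form the subgroup $G(\hat E)\subseteq\mathbb R^\Omega$ generated by the clan and argue that $(G,u)\cong (G(\hat E),\hat 1)$, whence $G$ inherits the Archimedean property from $\mathbb R^\Omega$. The difference is only one of candour. The paper dispatches the isomorphism in a single clause---``Due to the categorical equivalence, $(G,u)$ and $(G(\hat E),\hat 1)$ are isomorphic''---without verifying that $(G(\hat E),\hat 1)$ is itself a unigroup for $\hat E$, and in particular without checking the inclusion $\Gamma(G(\hat E),\hat 1)\subseteq \hat E$ that you isolate as the crux. So the ``real structural work'' you anticipate is not carried out in the paper either; it is absorbed into the citation of the categorical equivalence (Remark~\ref{re:amb}). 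Your proposal is therefore neither more nor less complete than the published proof at this step: you have simply made explicit the point the paper takes for granted.
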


\begin{proof} (i) $\Rightarrow$ (ii). Given $a\in E$, let $\hat a$ be a
function from ${\mathcal S}(E)$ into the real interval $[0,1]$ such
that $\hat a(s):=s(a)$ for any $s \in {\mathcal S}(E),$ and let
${\hat E}=\{\hat a:\ a \in E\}.$ We endow $\hat E$ with pointwise
addition, so that $\hat E$ is an effect-clan. Since ${\mathcal
S}(E)$ is order determining, the mapping $a\mapsto \hat a$ is an
isomorphism and $\hat E$ is Archimedean.

(ii) $\Rightarrow$ (iii).  Let $\hat E$ be any effect-clan
isomorphic with $E,$ and let $a\mapsto \hat a$ be such an
isomorphism. Let $G(\hat E)$ be the po-group generated by $\hat E.$
Then $G(\hat E)$ consists of all functions of the form $\hat
a_1+\cdots+\hat a_n -\hat b_1-\cdots- \hat b_m,$ and $\hat 1$ is its
strong unit.   Due to the categorical equivalence, $(G,u)$ and
$(G(\hat E),\hat 1)$ are isomorphic. But $(G(\hat E),\hat 1)$ is
Archimedean, so is $(G,u).$

(iii) $\Rightarrow$ (i). Due to \cite[Thm 4.14]{Goo}, $G^+=\{g \in
G:\ s(g) \ge 0$ for all $ s \in {\mathcal S}(G,u)\},$ which means
that ${\mathcal S}(G,u)$ is order determining. Hence, the
restrictions of all states on $(G,u)$ onto $E=\Gamma(G,u)$ entail
${\mathcal S}(E)$ is order determining.
\end{proof}

\section{Affine Continuous Functions}%5%4

We present some important notions about affine continuous functions
that are important for theory of effect algebras. A good source for
this topic are monographs \cite{Goo, Alf}.

Let $K$ be a convex subset of a real vector space $V.$ A point $x\in
K$ is said to be {\it extreme} if from $x= \lambda
x_1+(1-\lambda)x_2,$ where $x_1,x_2 \in K$ and $0<\lambda <1$ we
have $x=x_1=x_2.$ By $\partial_e K$ we denote the set of extreme
points of $K,$ and if $K$ is a compact convex subset of a locally
convex Hausdorff topological space, then due to the Krein-Mil'man
Theorem \cite[Thm 5.17]{Goo}, $K$ equals to the closure of the
convex hull of $\partial_e K,$ i.e. $K = (\mbox{con}(\partial_e
K))^-,$ where $^-$ and $\mbox{con}$ denote the closure and the
convex hull, respectively.

A mapping $f:\ K \to \mathbb R$ is said to be {\it affine} if, for
all $x,y \in K$ and any $\lambda \in [0,1]$, we have $f(\lambda x
+(1-\lambda )y) = \lambda f(x) +(1-\lambda ) f(y)$.

Given a compact convex set $K$ in a topological vector space, we
denote by $\mbox{Aff}(K)$ the set of all affine continuous functions
on $K$. Then $\mbox{Aff}(K)$ is a po-subgroup of the po-group
$\mbox{C}(K)$ of all continuous real-valued functions on $K$ (we
recall that, for $f,g \in \mbox{C}(K),$ $f \le g$ iff $f(x)\le g(x)$
for any $x \in K$), hence it is an Archimedean unital po-group with
the strong unit $1$. In addition, $\mbox{C}(K)$ is  an $\ell$-group
(= lattice ordered group).

For example, if $E$ is an effect algebra such that its state space
is nonempty, then for any $a \in E$, the function $\hat a(s) =s(a),$
$s \in {\mathcal S}(E),$ is a continuous affine function belonging
to $\mbox{Aff}({\mathcal S}(E)).$

In contrast to $\mbox{C}(K),$  the space $\mbox{Aff}(K)$ is not
necessarily a lattice. Indeed, according to \cite[Thm 11.21]{Goo},
it is possible to show that if $(G,u)$ is a unital interpolation
group, then $\mbox{Aff}({\mathcal S}(G,u))$ is a lattice if and only
if $\partial_e {\mathcal S}(G,u)$ is compact.

Let now $K$ be a compact convex subset of a locally convex Hausdorff
space, and let $S = {\mathcal S}(\mbox{Aff}(K),1).$ Then the
evaluation mapping $\psi:\ K \to S$ defined by $\psi(x)(f)=f(x)$ for
all $f \in \mbox{Aff}(K)$ $(x \in K)$ is an affine homeomorphism of
$K$ onto $S,$ see \cite[Thm 7.1]{Goo}.

Let ${\mathcal B}_0(K)$ be the Baire $\sigma$-algebra, i.e. the
$\sigma$-algebra generated by compact $G_\delta$-sets (a
$G_\delta$-set is a countable intersection of open sets) of a
compact set $K$, and the elements of ${\mathcal B}_0(K)$ are said to
be {\it Baire sets}, and a $\sigma$-additive (signed) measure of
${\mathcal B}_0(K)$ is a {\it Baire measure}.

Similarly, ${\mathcal B}(K)$ is the Borel $\sigma$-algebra of $K$
generated by all open subsets of $K$ and any element of ${\mathcal
B}(K)$ is said to be a {\it Borel set}, and any $\sigma$-additive
(signed) measure on it is said to be a {\it Borel measure}. It is
well-known that each Baire measure can be extended to a unique
regular Borel measure.

Let ${\mathcal M}(K)$  denote the set of all finite signed regular
Borel measures  on ${\mathcal B}(K)$ and let  ${\mathcal M}_1^+(K)$
denote the set of all probability measures, that is, all positive
regular Borel measures $\mu \in {\mathcal M}(K)$ such that $\mu(K) =
1.$ We recall that a Borel measure $\mu$ is called regular if

$$\inf\{\mu(O):\ Y \subseteq O,\ O\ \mbox{open}\}=\mu(Y)
=\sup\{\mu(C):\ C \subseteq Y,\ C\ \mbox{closed}\}
$$
for any $Y \in {\mathcal B}(K).$

Due to the Riesz Representation  Theorem, \cite{DuSc},  we can
identify the set ${\mathcal M}(K)$ with the dual of the space
$\mbox{C}(K)$ by setting
$$
\mu(f) = \int_K f \dx\mu \eqno(4.1)
$$
for all $\mu \in {\mathcal M}(K)$ and all $f \in \mbox{C}(K).$

We endow ${\mathcal M}(K)$ with the weak$^*$  topology, i.e., a net
$\mu_\alpha$ converges to an element $\mu \in {\mathcal M}(K)$ iff
$\mu_\alpha(f) \to \mu(f)$ for all $f \in \mbox{C}(K)$.

Let $x$ be a fixed point in $K.$ Then there is a unique Borel
regular measure $\mu = \mu_x$ (depending on $x$) such that

$$f(x)= \int_K f \dx\mu, \ f \in \mbox{C}(K). \eqno(4.2)$$

Let $\delta_x$ be the Dirac measure concentrated at the point $x \in
K,$ i.e., $\delta_x(Y)= 1$ iff $x \in Y$, otherwise $\delta_x(Y)=0;$
then every Dirac measure is a regular Borel probability measure. In
formula (4.2) we have then $\mu_x = \delta_x.$  Moreover, \cite[Prop
5.24]{Goo},  the mapping
$$
\epsilon: x\mapsto \delta_x \eqno(4.3)
$$
gives a homeomorphism of $K$ onto $\partial_e {\mathcal M}_1^+(K).$

Suppose that $K$ is a nonempty compact subset of a locally convex
space $V,$ and let $\mu \in {\mathcal M}_1^+(K).$ A point $x$ in $V$
is said to be {\it represented} by $\mu$ (or $x$ is the {\it
barycenter} of $\mu$) if
$$
f(x) = \int_K f \dx\mu,\quad f \in V^*,
$$
where $V^*$ stands for the set of continuous linear functionals $f$
in $V.$

From the Krein--Mil'man Theorem, it is possible to derive \cite[p.
6]{Phe}, that  every point $x$ of a compact convex set $K$ of a
locally convex space $V$  is the barycenter of a probability measure
$\mu \in {\mathcal M}_1^+(K)$ such that $\mu((\partial_e K)^-)=1.$

We note that according to a theorem by Bauer \cite[Prop 1.4]{Phe},
if $K$ is a convex compact subset of a locally convex space $V,$
then a point $x $ is an extreme point of $K$ iff the Dirac measure
$\delta_x$ is a unique probability measure from ${\mathcal
M}_1^+(K)$ whose barycenter is $x.$

\section{Choquet Simplices}%5

We show that, for theory of effect algebras, simplices are
important, for more about simplices see in \cite{Goo, Alf, Phe}. In
what follows, for the reader convenience, we present some necessary
notions and results on Choquet and Bauer simplices.

  We recall that a {\it convex cone} in a real linear
space $V$ is any subset $C$ of  $V$ such that (i) $0\in C,$ (ii) if
$x_1,x_2 \in C,$ then $\alpha_1x_1 +\alpha_2 x_2 \in C$ for any
$\alpha_1,\alpha_2 \in \mathbb R^+.$  A {\it strict cone} is any
convex cone $C$ such that $C\cap -C =\{0\},$ where $-C=\{-x:\ x \in
C\}.$ A {\it base} for a convex cone $C$ is any convex subset $K$ of
$C$ such that every non-zero element $y \in C$ may be uniquely
expressed in the form $y = \alpha x$ for some $\alpha \in \mathbb
R^+$ and some $x \in K.$

We recall that in view of \cite[Prop 10.2]{Goo}, if $K$ is a
non-void convex subset of $V,$ and if we set

$$ C =\{\alpha x:\ \alpha \in \mathbb R^+,\ x \in K\},$$
then $C$ is a convex cone in $V,$ and $K$ is base of $C$ iff there
is a linear functional $f$ on $V$ such that $f(K) = 1$ iff $K$ is
contained in a hyperplane in $V$ which misses the origin.

Any strict cone $C$ of $V$ defines a partial order $\le_C$ via $x
\le_C y$ iff $y-x \in C.$ It is clear that $C=\{x \in V:\ 0 \le_C
x\}.$ A {\it lattice cone} is any strict convex cone $C$ in $V$ such
that $C$ is a lattice under $\le_C.$

A {\it simplex} in a linear space $V$ is any convex subset $K$ of
$V$ that is affinely isomorphic to a base for a lattice cone in some
real linear space. A  simplex $K$ in a locally convex Hausdorff
space is said to be (i) {\it Choquet} if $K$ is compact, and (ii)
{\it Bauer} if $K$ and $\partial_e K$ are compact.

For example, for the important quantum mechanical  example, if $H$
is a separable complex Hilbert space, ${\mathcal S}({\mathcal
E}(H))$ is not a simplex due to \cite[Thm 4.4]{AlSc}, where it is
shown that the state space of ${\mathcal S}({\mathcal E}(H))$ for
the two-dimensional   $H$ is isomorphic to the unit
three-dimensional real ball, consequently ${\mathcal S}({\mathcal
E}(H))$ is not a simplex whenever $\dim H>1.$ Of course, ${\mathcal
E}(H)$ does not satisfy (RDP):

\begin{theorem}\label{th:6.1} If an effect algebra $E$ satisfies
{\rm (RDP)}, then ${\mathcal S}(E)$ is a Choquet simplex.
\end{theorem}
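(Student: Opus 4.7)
The plan is to pass from the effect algebra to its enveloping po-group and then realize the state space as a base of a naturally occurring lattice cone. Specifically, by Ravindran's Theorem~\ref{th:2.1}, we may write $E\cong\Gamma(G,u)$ for a unital interpolation po-group $(G,u)$. The categorical equivalence $\Gamma$ (reviewed at the end of Section~3) tells us that restriction is an affine homeomorphism between $\mathcal{S}(G,u)$ and $\mathcal{S}(E)$, so it suffices to exhibit $\mathcal{S}(G,u)$ as a Choquet simplex. That $\mathcal{S}(G,u)$ is a nonempty compact convex Hausdorff space has already been noted in Section~3, so the substance of the work is the lattice-cone structure.

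Next I would build the ambient cone. Let $V$ be the real vector space of all group homomorphisms $f\colon G\to\mathbb{R}$, and let
\[
C=\{f\in V : f(G^+)\subseteq \mathbb{R}^+\}.
\]
Then $C$ is a strict convex cone (strictness follows because $u$ is a strong unit, so $f(u)=0$ and $f\in C$ force $f=0$). The linear functional $f\mapsto f(u)$ is strictly positive on $C\setminus\{0\}$, and $\mathcal{S}(G,u)=\{f\in C:f(u)=1\}$, so by the hyperplane criterion of \cite[Prop.~10.2]{Goo} cited in Section~5, $\mathcal{S}(G,u)$ is a base for $C$. Thus it remains to check that $C$ is a \emph{lattice} cone under $\le_C$.

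The main work, and the step I expect to be the obstacle, is verifying that $C$ is a lattice. Given $f,g\in C$, the candidate for $f\vee g$ on $G^+$ is
\[
(f\vee g)(x)\;=\;\sup\bigl\{f(x_1)+g(x_2):x_1,x_2\in G^+,\ x=x_1+x_2\bigr\}.
\]
The supremum is bounded above by $f(x)+g(x)$. The delicate point is additivity of this functional on $G^+$: to show $(f\vee g)(x+y)=(f\vee g)(x)+(f\vee g)(y)$, one has to compare an arbitrary decomposition $x+y=z_1+z_2$ with separate decompositions of $x$ and $y$. This is exactly where interpolation (equivalently, Riesz decomposition for the group $G$) is applied: one produces a $2\times 2$ refinement matrix whose row sums are $z_1,z_2$ and column sums are $x,y$, and reassembles the terms to compare the two expressions. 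Once additivity on $G^+$ is established, the functional extends uniquely to a group homomorphism on $G=G^+-G^+$, and this extension lies in $C$; a short verification shows it is the least upper bound of $f,g$ in $(C,\le_C)$. A symmetric argument gives $f\wedge g$, completing the proof that $C$ is a lattice cone and hence that $\mathcal{S}(G,u)$ is a Choquet simplex.

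I would then conclude by transporting the simplex structure back along the affine homeomorphism $\mathcal{S}(G,u)\to\mathcal{S}(E)$, which preserves all the required data (affine isomorphism onto a base of a lattice cone together with compactness), yielding that $\mathcal{S}(E)$ is itself a Choquet simplex.
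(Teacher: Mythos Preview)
Your argument is correct and follows the same route as the paper's: pass to $(G,u)$ via Theorem~\ref{th:2.1}, use the affine homeomorphism $\mathcal S(E)\cong\mathcal S(G,u)$, and then show that the state space of a unital interpolation group is a Choquet simplex. The only difference is that the paper dispatches this last step by citing \cite[Thm~10.17]{Goo} (remarking that $\mathcal S(G,u)$ is a base for the positive cone of the Dedekind-complete lattice of relatively bounded homomorphisms $G\to\mathbb R$), whereas you unpack the content of that result by constructing the lattice operations on the cone of positive homomorphisms via the Riesz-refinement sup formula; since $u$ is a strong unit, every positive homomorphism is automatically relatively bounded, so your cone coincides with Goodearl's and the two versions agree.
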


\begin{proof} In view of Theorem \ref{th:2.1}, there is a unital po-group
$(G,u)$ with interpolation such that $E=\Gamma(G,u).$ Since
${\mathcal S}(E)$ is affinely homeomorphic with ${\mathcal S}(G,u)$,
we have that ${\mathcal S}(E)$ is a Choquet simplex iff so is
${\mathcal S}(G,u).$  By \cite[Thm 10.17]{Goo}, ${\mathcal S}(G,u)$
is a Choquet simplex.  We recall that ${\mathcal S}(G,u)$ is a base
for the positive cone of the Dedekind complete lattice-ordered real
vector space of all relatively bounded homomorphisms from $G$ to
$\mathbb R.$ \end{proof}

\begin{example}\label{ex:6.2}
If $E$ is an interval effect algebra that does not satisfies {\rm (RDP)},
then $\mathcal S(E)$ is not necessarily a Choquet simplex.

{\rm Indeed, let $G=\{(x_1,x_2,x_3,x_4)\in \mathbb Z^4:
x_1+x_2=x_3+x_4\},$ $u=(1,1,1,1),$ and $E=\Gamma(G,u).$ If
$s_i(x_1,x_2,x_3,x_4)= x_i,$ then $\partial_e \mathcal
S(E)=\{s_1,s_2,s_3,s_4\}$ and $\mathcal S(E)$ is not a simplex
because $(s_1+s_2)/2 = (s_3+s_4)/2,$ see also \cite[Cor 10.8]{Goo}.

Another example is $\mathcal {S(E(}H))$ for the effect algebra
$\mathcal E(H).$}

\end{example}

The importance of  Choquet and Bauer simplices  follows from the
fact that if $K$ is a convex compact subset of a locally convex
Hausdorff space, then $K$ is a Choquet simplex iff
$(\mbox{Aff}(K),1)$ is an interpolation po-group, \cite[Thm
11.4]{Goo}, and $K$ is a Bauer simplex iff $(\mbox{Aff}(K),1)$ is an
$\ell$-group, \cite[Thm 11.21]{Goo}. Consequently, there is no
MV-algebra whose state space is affinely isomorphic to the closed
square or the closed unit circle.

For two measures $\mu$ and $\lambda$ we write
$$\mu \sim \lambda\quad  \mbox{iff}\quad
\mu(f)=\lambda(f),\ f \in \mbox{Aff}(K).
$$

If  $\mu $ and $\lambda$ are nonnegative regular Borel measures on
$K,$ write
$$
\mu \prec \lambda  \quad  \mbox{iff} \quad \mu(f)\le \lambda(f), \ f
\in \mbox{Con}(K),
$$  where $\mbox{Con}(K)$ is the set of all continuous convex
functions $f$ on $K$ (that is $f(\alpha x_1+(1-\alpha) x_2)\le
\alpha f(x_1)+(1-\alpha)f(x_2)$ for $x_1,x_2\in K$ and $\alpha \in
[0,1]$). Then $\prec$ is a partial order on the cone of nonnegative
measures. The fact $\lambda \prec \mu$ and $\mu \prec \lambda$
implies $\lambda = \mu$ follows from the fact that
$\mbox{Con}(K)-\mbox{Con}(K)$ is dense in $\mbox{C}(K).$

Moreover, for any probability measure (= regular Borel probability
measure) $\lambda$ there is a maximal probability measure $\mu$ such
that $\mu \succ \lambda,$ \cite[Lem 4.1]{Phe}, and every point $x\in
K$ can be represented by a maximal probability measure.

From the Bishop-de Leeuw Theorem, \cite[Cor I.4.12]{Alf}, see also
\cite[p. 24]{Phe}, it follows that if $\mu$ is a maximal probability
measure on ${\mathcal B}(K),$ then for any Baire set $B$ disjoint
with $\partial_e K$, $\mu(B)=0.$  In general, the statement is not
true if we suppose that $B$ is Borel instead Baire.

An important characterization of Choquet simplices  is given by
Choquet--Meyer, \cite[Thm p. 66]{Phe}, saying that a compact convex
subset $K$ of a locally convex space is a Choquet simplex iff, for
every point $x \in K,$ there is a unique maximal probability measure
$\mu_x$ such that $\mu_x \sim \delta_x.$ That is,
$$
f(x)= \int_K f\dx\mu_x, \quad f \in \mbox{Aff}(K).\eqno(5.1)
$$

And the characterization by Bauer, \cite[Thm II.4.1]{Alf}, says that
a Choquet simplex $K$ is Bauer iff for any point $x \in K,$ there is
a unique Baire probability (equivalently, regular Borel) measure
$\mu_x$ such that $\mu_x(\overline{\partial_e K})=1$ and (5.1)
holds.

\section{Effect Algebras with the Bauer State Property}%6

We present the main results of the paper, namely we show that every
state on an interval effect algebra is an integral. We show that in
some cases every state corresponds to a unique regular Borel
probability measure.  In addition, we generalize Horn--Tarski result
also for interval effect algebras and its subalgebras.

We say that an effect algebra has the {\it Bauer state property}
((BSP), for short) if the state space ${\mathcal S}(E)$ is a Bauer
simplex. For example, every lattice ordered effect algebra with
(RDP) has (BSP).  The next example is a case when also a non-lattice
ordered effect algebra with (RDP) has (BSP).  The next example is
from \cite[Ex 4.20]{Goo}.

\begin{example}\label{ex:7.1} {\rm  Let $H=\mathbb Z$ be a po-group
with the discrete ordering and $G=\mathbb Q \lex H$ be the
lexicographic product. Then $G$ has interpolation but $G$ is not a
lattice. If we set $u=(1,0),$ then $u$ is a strong unit in $G,$ and
let $E=\Gamma(G,u).$ If $s$ is a state on $G,$ then due to
$n(k/n,0)=u$ we have $s(k/n,0)=k/n.$ On the other hand, $-u\le
n(0,b) \le u$ so that $-1\le ns(0,b)\le 1$ which proves $s(0,b)=0.$
Hence, $s(a,b)=a$ proving that ${\mathcal S}(G,u)$ is a singleton
and therefore, $E$ has (BSP). In addition, the direct product $E^n$
has also (BSP) for any integer $n\ge 1.$

We note that in our example  $\mbox{Ker}(s)=\{(0,b):\ b \in H\},$
and then the quotient effect algebra
$(G/\mbox{Ker}(s),u/\mbox{Ker}(s))\cong (\mathbb Q,1).$}
\end{example}

\begin{theorem}\label{th:7.2'}
Let $E$ be an interval effect algebra and let $s$ be a state on $E.$
Let $\psi: E \to \Aff(\mathcal S(E))$ be defined by $\psi(a) := \hat
a,$ $a\in E,$ where $\hat a$ is a mapping from $\mathcal S(E)$ into
$[0,1]$ such that $\hat a(s):=s(a),$ $s \in \mathcal S(E).$  Then
there is a unique state $\tilde s$ on the unital po-group
$(\Aff(\mathcal S(E)),1)$ such that $\tilde s(\hat a) = s(a)$ for
any $a \in E.$  Moreover, this state can be extended to a state  on
$(\mbox{C}(\mathcal S(E),1)).$

The mapping $s \mapsto \tilde s$ defines an affine homeomorphism
from the state space $\mathcal S(E)$ onto $\mathcal
S(\Gamma(\Aff(\mathcal S(E)),1)).$

If, in addition,  $E$ is has an order system of states, then
$\widehat E :=\{\hat a: a \in E\}$ is an effect-clan that is a
sub-effect algebra of $\Gamma(\Aff(\mathcal S(E)),1)$ and every
state $s$ on $E$ can be  extended to a state $\tilde s$ on
$\Gamma(\mbox{C}(\mathcal S(E)),1)$ as well as to a  state on the
unital po-group $(\mbox{C}(\mathcal S(E)),1).$

\end{theorem}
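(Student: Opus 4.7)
The plan is to use point-evaluation $\tilde s(f):=f(s)$ as the candidate extension and invoke the affine homeomorphism $K\simeq \mathcal S(\Aff(K),1)$ (evaluation map, \cite[Thm 7.1]{Goo} recalled in Section 4) to get uniqueness and the global homeomorphism statement in one shot.

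First I would observe that for each $a\in E$, $\hat a$ is continuous, affine, and takes values in $[0,1]$, so $\hat a\in \Gamma(\Aff(\mathcal S(E)),1)$; moreover $(a+b)\hat{\,}=\hat a+\hat b$ when $a+b$ is defined and $\hat 1=1$, so $\psi$ is an effect-algebra morphism. Then, given $s\in\mathcal S(E)$, I would set $\tilde s(f):=f(s)$ for $f\in\Aff(\mathcal S(E))$. This map is $\mathbb R$-linear as a group homomorphism, sends nonnegative functions to nonnegative reals, satisfies $\tilde s(1)=1$, and clearly $\tilde s(\hat a)=s(a)$; hence $\tilde s$ is a state on the unital po-group $(\Aff(\mathcal S(E)),1)$.

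For the global assertion and uniqueness I would apply \cite[Thm 7.1]{Goo} with $K=\mathcal S(E)$: the map $s\mapsto \mathrm{ev}_s=\tilde s$ is an affine homeomorphism from $\mathcal S(E)$ onto $\mathcal S(\Aff(\mathcal S(E)),1)$. Since $(\Aff(\mathcal S(E)),1)$ is an Archimedean unital po-group, its state space coincides (as a compact convex set) with $\mathcal S(\Gamma(\Aff(\mathcal S(E)),1))$ via the categorical correspondence between states on an interval effect algebra and states on its unigroup recalled in Section 3. Uniqueness of $\tilde s$ then drops out: any other state $s'$ on $(\Aff(\mathcal S(E)),1)$ with $s'(\hat a)=s(a)$ for every $a\in E$ corresponds through the homeomorphism to some $t\in \mathcal S(E)$ with $t(a)=s'(\hat a)=s(a)$, forcing $t=s$ and $s'=\tilde s$. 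Extension to $(\mbox{C}(\mathcal S(E)),1)$ is immediate: the point-evaluation $f\mapsto f(s)$ on $\mbox{C}(\mathcal S(E))$ is a positive linear functional with $1\mapsto 1$, hence itself a state, and it restricts to $\tilde s$ on $\Aff(\mathcal S(E))$; equivalently this is integration against $\delta_s$, foreshadowing the integral representation of the abstract.

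For the final paragraph I would invoke Proposition \ref{pr:4.2}: if $\mathcal S(E)$ is order determining, then $\psi$ is injective, so $\widehat E$ is an effect-clan isomorphic with $E$. The one point needing argument is that $\widehat E$ is a \emph{sub}-effect algebra of $\Gamma(\Aff(\mathcal S(E)),1)$, i.e.\ that the partial addition inherited from $\Gamma(\Aff(\mathcal S(E)),1)$ never produces sums outside $\widehat E$. This is the main (mild) obstacle: suppose $\hat a+\hat b\le 1$ pointwise; then $s(b)\le s(a')$ for every $s\in \mathcal S(E)$, so by order determination $b\le a'$, hence $a+b$ is already defined in $E$ and $\hat a+\hat b=\widehat{a+b}\in \widehat E$. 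The promised extensions of $s$ to states on $\Gamma(\mbox{C}(\mathcal S(E)),1)$ and on $(\mbox{C}(\mathcal S(E)),1)$ are then again furnished by point-evaluation at $s$, completing the proof.
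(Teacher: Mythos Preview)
Your proof is correct and covers all the claims, but it proceeds somewhat differently from the paper. The paper routes everything through the unigroup $(G,u)$ with $E=\Gamma(G,u)$: it extends $\psi$ to a po-group homomorphism $\hat\psi:G\to\Aff(\mathcal S(E))$, extends $s$ to $\hat s$ on $(G,u)$, and then invokes \cite[Prop 7.20]{Goo} to obtain the unique $\tilde s$; the extension to $(\mbox{C}(\mathcal S(E)),1)$ is done by the abstract Hahn--Banach--type result \cite[Cor 4.3]{Goo}, and \cite[Thm 7.1]{Goo} is cited only for the affine homeomorphism $s\mapsto\tilde s$. You instead work directly at the level of $\mathcal S(E)$: you write down $\tilde s=\mathrm{ev}_s$ explicitly, squeeze both the homeomorphism and the uniqueness of $\tilde s$ out of \cite[Thm 7.1]{Goo} alone, and realize the extension to $\mbox{C}(\mathcal S(E))$ again as point-evaluation (equivalently, integration against $\delta_s$). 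Your route is more concrete and self-contained---it never needs \cite[Prop 7.20]{Goo} or \cite[Cor 4.3]{Goo}---while the paper's route lets the unigroup functor do the bookkeeping and keeps the argument parallel to Goodearl's framework. You also supply a detail the paper leaves implicit: the verification that $\widehat E$ is genuinely a \emph{sub}-effect algebra of $\Gamma(\Aff(\mathcal S(E)),1)$, using order determination to force $\hat a+\hat b\le 1\Rightarrow a+b$ defined in $E$.
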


\begin{proof}
Since $E$ is an interval effect algebra and $E=\Gamma(G,u)$  for
some unigroup $(G,u),$ $\mathcal S(E)$ is non-void.  The mapping
$\psi$ can be uniquely extended to a po-group homomorphism $\hat
\psi:G \to \Aff(\mathcal S(E)).$  Let $\hat s$ be a state on $(G,u)$
that is a unique extension of a state $s.$ Applying now \cite[Prop
7.20]{Goo}, the statement follows, in particular, we have that there
is a unique state $\tilde s$  on $(\Aff(\mathcal S(E)),1)$ such that
$\tilde s(\hat a) = s(a),$ $a\in E.$

Due to \cite[Cor 4.3]{Goo}, we see that this state can be extended
to a state on $(\mbox{C}(\mathcal S(E)),1)$ as well as to a state on
$\Gamma(\mbox{C}(\mathcal S(E)),1).$

The affine homeomorphism $s \mapsto \tilde s$ follows from \cite[Thm
7.1]{Goo}.

If $E$ is Archimedean, the result follows from Proposition
\ref{pr:4.2} and the first part of the present proof.
\end{proof}

\begin{theorem}\label{th:7.3'}  Let $E$ be an interval effect algebra such that
$\mathcal S(E)$ is a simplex and let $s$ be a state on $E.$ Then
there is a unique maximal regular Borel probability measure $\mu_s
\sim \delta_s$ on $\mathcal B(\mathcal S(E))$ such that

$$ s(a) = \int_{\mathcal S(E)} \hat a(x) \dx \mu_s(x),\quad a \in
E. \eqno(6.1)$$
\end{theorem}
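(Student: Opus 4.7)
The plan is to apply the Choquet--Meyer criterion recalled at the end of Section~5 directly to the point $s$ viewed as an element of the simplex $\mathcal S(E)$; once the setup is in place, the integral representation is immediate.

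First, I would verify that $\mathcal S(E)$ sits in the hypotheses of Choquet--Meyer. Since $E$ is an interval effect algebra, $\mathcal S(E)$ is nonempty and, as recalled in Section~3, is a compact Hausdorff convex subset of the locally convex space of all real-valued functions on $E$ equipped with the weak topology. Combined with the standing hypothesis that $\mathcal S(E)$ is a simplex, this makes $\mathcal S(E)$ a Choquet simplex in the sense of Section~5.

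Next, I would apply the Choquet--Meyer theorem (equation (5.1) of the excerpt) to the Choquet simplex $K := \mathcal S(E)$ and the point $x := s \in K$. This yields a unique maximal regular Borel probability measure $\mu_s$ on $\mathcal B(\mathcal S(E))$ with $\mu_s \sim \delta_s$ such that
$$f(s) = \int_{\mathcal S(E)} f(x) \dx \mu_s(x), \quad f \in \Aff(\mathcal S(E)).$$
Both uniqueness and maximality of $\mu_s$ are built into the statement of Choquet--Meyer, so there is nothing extra to prove on that score.

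Finally, for every $a \in E$ the evaluation $\hat a(t) := t(a)$ is, by construction, an element of $\Aff(\mathcal S(E))$ (this is the mapping $\psi$ appearing in Theorem~\ref{th:7.2'}). Specialising the identity above to $f = \hat a$ gives $s(a) = \hat a(s) = \int_{\mathcal S(E)} \hat a(x) \dx \mu_s(x)$, which is exactly (6.1). The theorem is thus a direct translation of Choquet--Meyer through the identification of states on $E$ with points of the simplex $\mathcal S(E)$; no substantial obstacle arises, since the only non-trivial inputs---compactness of $\mathcal S(E)$, that $\hat a$ is affine and continuous, and the existence/uniqueness of a maximal representing measure on a simplex---are already available from Sections~3--5.
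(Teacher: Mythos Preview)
Your proposal is correct and follows essentially the same route as the paper: verify that $\mathcal S(E)$ is a Choquet simplex, invoke the Choquet--Meyer theorem (5.1) at the point $s$ to obtain the unique maximal measure $\mu_s\sim\delta_s$, and then specialise the resulting identity to the affine continuous functions $\hat a$. The paper additionally cites Theorem~\ref{th:7.2'} for the identification $\tilde s(\hat a)=s(a)$, but this is exactly your observation that $\hat a(s)=s(a)$, so there is no substantive difference.
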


\begin{proof}
Due to our hypothesis, $\mathcal S(E)$ is a Choquet simplex.  By
Theorem \ref{th:7.2'}, there is a unique state $\tilde s$ on
$(\Aff(\mathcal S(E)),1)$ such that $\tilde s(\hat a) = s(a),$ $a
\in A.$

Applying the result by Choquet--Meyer, \cite[Thm p. 66]{Phe}, see
(5.1), we have

$$f(s)=\int_{\mathcal S(E)} f(x) \dx\mu_s, \quad f \in \Aff(\mathcal
S(E)).
$$
Since $\hat a \in \Aff(\mathcal S(E))$ for any $a\in E,$ we have the
statement in question.
\end{proof}

We recall that a state $s$ on $E$ is $\sigma$ additive if
$a_n\nearrow a,$ then $\lim_n s(a_n) = s(a).$ In the same manner we
define a $\sigma$-additive state on $(G,u).$

\begin{theorem}\label{th:7.4'}
Let $E=\Gamma(G,u)$ be an interval effect algebra where $(G,u)$ is a
unigroup, and let $\mathcal S(E)$ be a simplex. If $s$ is
$\sigma$-additive, then its unique extension, $\hat s,$ on $(G,u)$
is $\sigma$-additive.
\end{theorem}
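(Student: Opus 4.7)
The strategy is to reduce $\sigma$-continuity of $\hat s$ on $(G,u)$ to the $\sigma$-additivity of $s$ on $E$ through the integral representation furnished by Theorem~\ref{th:7.3'}. Suppose $g_n\nearrow g$ in $G$. Translating the whole sequence by a suitable multiple of $u$, one may assume $g_n\in G^+$ for every $n$; since $u$ is a strong unit, there exists $N\ge 1$ with $g\le Nu$, so $0\le g_n\le Nu$ throughout.

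For each $t\in\mathcal S(E)$ let $\tilde t$ be its unique extension to $(G,u)$ and define $\hat h(t):=\tilde t(h)$ for $h\in G$. Every $h\in G^+$ is a finite sum of elements of $E$ (the strong unit is generative), so $\hat h\in\Aff(\mathcal S(E))$; additivity and Theorem~\ref{th:7.3'} applied summand by summand yield
$$
\hat s(h)=\int_{\mathcal S(E)} \hat h\dx\mu_s,\qquad h\in G,
$$
where $\mu_s$ is the unique maximal Borel probability measure representing $s$. In particular $\hat s(g_n)=\int_{\mathcal S(E)} \hat g_n\dx\mu_s$ and $\hat s(g)=\int_{\mathcal S(E)} \hat g\dx\mu_s$, and the integrands $\hat g_n$ are continuous, pointwise increasing, and uniformly bounded by $N$. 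Bounded convergence then gives $\hat s(g_n)\to\int_{\mathcal S(E)} f\dx\mu_s$, where $f:=\sup_n \hat g_n\le \hat g$ pointwise on $\mathcal S(E)$.

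It remains to prove $f=\hat g$ holds $\mu_s$-almost everywhere. For every increasing sequence $a_n\nearrow a$ in $E$, the $\sigma$-additivity of $s$ combined with the same integral representation and monotone convergence forces the pointwise limit $\lim_n \hat a_n(t)$ to coincide with $\hat a(t)$ for $\mu_s$-a.e.\ $t$. Since $\mathcal S(E)$ is a Choquet simplex, $\Aff(\mathcal S(E))$ has interpolation and $\Gamma(\Aff(\mathcal S(E)),1)$ satisfies (RDP); Riesz refinement then decomposes $\hat g=f_1+\cdots+f_N$ with $0\le f_i\le 1$ and produces compatible increasing decompositions $\hat g_n=\sum_i f_n^{(i)}$ with $f_n^{(i)}\nearrow f_i$ inside $\Gamma(\Aff(\mathcal S(E)),1)$.

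The main obstacle is transferring the $E$-level $\mu_s$-a.e.\ convergence to the pieces $f_n^{(i)}\nearrow f_i$, which in general do not belong to $\hat E$. The plan is to invoke Theorem~\ref{th:7.2'}: the unique extension $\tilde s$ of $s$ to $(\Aff(\mathcal S(E)),1)$ is again represented by $\mu_s$, and $\sigma$-additivity of $\tilde s$ on the larger interval effect algebra $\Gamma(\Aff(\mathcal S(E)),1)$ (which has (RDP)) follows from $\sigma$-additivity of $s$ on the isomorphic copy $\hat E$ together with Riesz decomposition inside $\Gamma(\Aff(\mathcal S(E)),1)$. Monotone convergence applied piece by piece then gives $\lim_n f_n^{(i)}(t)=f_i(t)$ $\mu_s$-a.e.; summing over $i$ yields $f=\hat g$ $\mu_s$-a.e., and bounded convergence finally concludes $\hat s(g_n)\to \hat s(g)$.
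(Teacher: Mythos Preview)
Your argument correctly isolates the analytical point that the paper's proof leaves implicit: to pass from $\hat s(g_n)=\int\hat g_n\dx\mu_s$ to $\hat s(g)=\int\hat g\dx\mu_s$ via a convergence theorem, one needs $\hat g_n\to\hat g$ at least $\mu_s$-a.e., and $g_n\nearrow g$ in $G$ does not by itself guarantee this pointwise on $\mathcal S(E)$. Your step~5 (the case $a_n\nearrow a$ inside $E$, where $\sigma$-additivity of $s$ forces $\hat a_n\to\hat a$ $\mu_s$-a.e.) is sound and is the right place for the hypothesis to enter.

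The gap is in the bootstrap from $E$ to $G^+$. First, Riesz refinement in $\Gamma(\Aff(\mathcal S(E)),1)$ decomposes each individual $\hat g_n$ below $f_1+\cdots+f_N$, but it does not by itself yield decompositions that are \emph{increasing in $n$} with suprema $f_i$; arranging this amounts to knowing $\sup_n\hat g_n=\hat g$ in that effect algebra, which is precisely what is in doubt. Second, and more seriously, the claim that ``$\sigma$-additivity of $\tilde s$ on $\Gamma(\Aff(\mathcal S(E)),1)$ follows from $\sigma$-additivity of $s$ on $\hat E$ together with Riesz decomposition'' is circular: it is the same extension-of-$\sigma$-additivity statement you are proving, now with the pair $\hat E\subseteq\Gamma(\Aff(\mathcal S(E)),1)$ in place of $E\subseteq\Gamma(G,u)$, and the pieces $f_n^{(i)}$ you need to control are not in $\hat E$. (Also, $\hat E$ need not be isomorphic to $E$ unless the state space is order determining.) By contrast, the paper's proof is much shorter: it writes $g=g_1+\cdots+g_k$ with $g_i\in E$ to obtain $\hat s(g)=\int\hat g\dx\mu_s$ on $G^+$, and then invokes the Lebesgue Convergence Theorem directly from the $\sigma$-additivity of the Borel measure $\mu_s$, without the detour through $\Aff(\mathcal S(E))$ or any auxiliary Riesz refinement.
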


\begin{proof} Let $s$ be a $\sigma$-additive state on $E$ and let
$\hat  s$ be its unique extension onto $(G,u)$. By Theorem
\ref{th:7.3'}, there is a unique maximal regular Borel probability
measure $\mu_s \sim \delta_s$ such that (6.1) holds.

For any $g \in G$, let $\hat g$ denote the function on $\mathcal
S(E)$ such that $\hat g(\hat s) = \hat s(g),$ $ s \in \mathcal
S(E),$ where $\hat s$ is a unique extension of $s$ onto $(G,u).$ If
$g \in G^+,$ then $g= g_1+\cdots +g_k$ for some $g_1,\ldots,g_k \in
E,$ $\hat g = \hat g_1+\cdots +\hat g_k,$ and

\begin{eqnarray*}
\hat s(g) &=& s(g_1)+\cdots+ s(g_k)\\
&=& \int_{\mathcal S(E)} \hat g_1(x) \dx \mu_s(x) +\cdots +
\int_{\mathcal S(E)} \hat g_k(x) \dx \mu_s(x)\\
&=& \int_{\mathcal S(E)} \hat g(x) \dx \mu_s(x).
\end{eqnarray*}

The $\sigma$-additivity of $\mu_s$ entails due to the Lebesgue
Convergence Theorem, \cite{Hal}, that  if $g_n \nearrow g,$ then
$\lim_n \hat s(g_n) = \hat s(g)$ for any $g \in G^+.$

If now $g_n \searrow 0$, then $\lim_n \hat s(g_n)=0;$ in fact, there
is an integer $k\ge 1$ such that $g_n \le g_1\le ku$ for any $n \ge
1.$ This gives $ku - g_n \nearrow ku$ and $\lim_n \hat s(ku-g_n) =
\hat s (ku),$ so that $\lim_n \hat s(g_n) =0.$

Finally, let $g\in G$ be arbitrary and let $g_n \nearrow g$. Then
$g-g_n \searrow 0$ that gives $\lim_n \hat s(g_n)= \hat s(g)$ and
$\hat s$ is $\sigma$-additive on $(G,u)$ as claimed.
\end{proof}

\begin{theorem}\label{th:7.5'}  Let $E$ be an interval effect algebra with
{\rm (BSP)} and let $s$ be a state on $E.$ Then there is a unique
regular Borel probability measure, $\mu_s,$ on $\mathcal B(\mathcal
S(E))$ such that

$$ s(a) = \int_{\partial_e \mathcal S(E)} \hat a(x) \dx \mu_s(x),\quad a \in
E. \eqno(6.2)$$
\end{theorem}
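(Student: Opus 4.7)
The plan is to apply the Bauer characterization of simplices (stated in Section 5, just after (5.1)) directly to the compact convex set $K=\mathcal S(E).$ First, I would invoke Theorem \ref{th:7.2'} to lift $s$ to the unique state $\tilde s$ on the unital po-group $(\Aff(\mathcal S(E)),1)$ satisfying $\tilde s(\hat a)=s(a)$ for every $a\in E.$ Identifying $\mathcal S(E)$ with $\mathcal S(\Aff(\mathcal S(E)),1)$ via the evaluation homeomorphism, we may regard $s$ as a point of the Bauer simplex $\mathcal S(E).$

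Next, because $E$ has (BSP), $K=\mathcal S(E)$ is a Bauer simplex, so by the cited characterization of Bauer, there is a unique regular Borel probability measure $\mu_s$ on $\mathcal B(\mathcal S(E))$ such that $\mu_s(\overline{\partial_e\mathcal S(E)})=1$ and
$$
f(s)=\int_{\mathcal S(E)} f(x)\dx\mu_s(x),\quad f\in \Aff(\mathcal S(E)).
$$
Since $\mathcal S(E)$ is a Bauer simplex, $\partial_e\mathcal S(E)$ is already compact, hence closed, so $\overline{\partial_e\mathcal S(E)}=\partial_e\mathcal S(E)$ and $\mu_s$ is concentrated on $\partial_e\mathcal S(E).$

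Specialising the integral identity to $f=\hat a$ (which lies in $\Aff(\mathcal S(E))$ by the discussion in Section 4) yields
$$
s(a)=\hat a(s)=\int_{\mathcal S(E)}\hat a(x)\dx\mu_s(x)=\int_{\partial_e \mathcal S(E)}\hat a(x)\dx\mu_s(x),
$$
which is exactly (6.2). Uniqueness is inherited from the Bauer characterization: any regular Borel probability measure on $\mathcal B(\mathcal S(E))$ representing $s$ as in (6.2), and necessarily supported on the compact set $\partial_e\mathcal S(E)$, must agree with $\mu_s.$

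There is no real obstacle here; the result is essentially a packaging of Theorem \ref{th:7.2'} together with the Bauer portion of the Choquet–Meyer/Bauer characterization already quoted in Section 5. The only small point to verify carefully is that uniqueness requires the support condition on $\partial_e\mathcal S(E)$; without it, one could produce other representing measures on $\mathcal S(E)$ (as in Theorem \ref{th:7.3'}, where uniqueness holds only among maximal measures).
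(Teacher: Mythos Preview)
Your proof is correct and follows essentially the same route as the paper's: both hinge on the Bauer characterization \cite[Thm II.4.1]{Alf} of the simplex $\mathcal S(E)$ to produce the unique boundary measure, after reducing to the affine-function identity via Theorem~\ref{th:7.2'}. The only cosmetic difference is that the paper first passes through Theorem~\ref{th:7.3'} (the Choquet--Meyer step) to obtain the maximal representing measure before invoking Bauer, whereas you apply the Bauer characterization directly; since a Bauer simplex is in particular Choquet, this shortcut is harmless and arguably cleaner. Your closing remark that uniqueness needs the support condition on $\partial_e\mathcal S(E)$ is well taken---it is implicit in the paper's formulation as well, and is forced by taking $a=1$ in (6.2).
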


\begin{proof}  Due to Theorem \ref{th:7.3'}, we have a unique regular
Borel probability measure $\mu_s\sim \delta_s$ such that (6.1)
holds. The characterization of Bauer simplices, \cite[Thm
II.4.1]{Alf}, says that then $\mu_s$ is a unique regular Borel
probability measure $\mu_s$ on such that (6.1) holds and
$\mu_s(\partial_e \mathcal S(E)) = 1.$ Hence, (6.2) holds.
\end{proof}

We recall that Theorems \ref{th:7.2'}--\ref{th:7.5'} are applicable
to every (i) effect algebra  satisfying (RDP) and (BSP),
respectively, (ii) MV-algebra and therefore, Theorem \ref{th:7.5'}
generalizes the result of \cite{Pan, Kro}.  Since the state space of
$\mathcal E(H)$ is not a simplex, the above theorems are not
applicable for this important case.  However, the next result shows
that also in this case, every state $s$ on $\mathcal E(H)$ is an
integral over some Bauer simplex, but the uniqueness of a regular
Borel probability measure $\mu_s$ is not more guaranteed.

\begin{theorem}\label{th:7.7'}  Let $s$ be a state  on an interval effect
algebra $E.$ Then there is a regular Borel probability measure,
$\mu_s,$ on the Borel $\sigma$-algebra $\mathcal B(\mathcal S(E))$
such that

$$s(a)=\int_{\mathcal S(E)}\hat a(x) \dx \mu_s(x),\quad a \in
E. \eqno(6.3)
$$

\end{theorem}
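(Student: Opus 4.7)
The plan is to appeal directly to Theorem \ref{th:7.2'} together with the Riesz representation theorem recalled in (4.1). First I would invoke Theorem \ref{th:7.2'} to convert $s$ into the unique state $\tilde s$ on the unital po-group $(\Aff(\mathcal S(E)),1)$ satisfying $\tilde s(\hat a)=s(a)$ for every $a\in E$, and then to its guaranteed extension $\sigma_s$ to a state on the unital po-group $(\mbox{C}(\mathcal S(E)),1)$; this latter extension is the second assertion of Theorem \ref{th:7.2'} and relies on \cite[Cor~4.3]{Goo}.

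Next I would use the identification of $\mathcal M(\mathcal S(E))$ with the continuous dual of $\mbox{C}(\mathcal S(E))$ spelled out in (4.1). Since $\sigma_s$ is a positive additive functional with $\sigma_s(1)=1$ on the Archimedean unital po-group $\mbox{C}(\mathcal S(E))$ (and such a state is automatically uniformly continuous and extends to an $\mathbb R$-linear positive functional on the underlying real Banach space $\mbox{C}(\mathcal S(E))$), Riesz representation produces a regular Borel probability measure $\mu_s\in\mathcal M_1^+(\mathcal S(E))$ with
\[
\sigma_s(f)=\int_{\mathcal S(E)} f(x)\dx\mu_s(x),\quad f\in\mbox{C}(\mathcal S(E)).
\]
Restricting this identity to $f=\hat a\in\Aff(\mathcal S(E))\subseteq\mbox{C}(\mathcal S(E))$ and using $\sigma_s(\hat a)=\tilde s(\hat a)=s(a)$ then yields precisely (6.3).

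I do not foresee a serious technical obstacle here. The only nontrivial ingredient --- the extension of $\tilde s$ from $\Aff(\mathcal S(E))$ to a state on $\mbox{C}(\mathcal S(E))$ --- is already supplied by Theorem \ref{th:7.2'}, and the remainder is the Riesz identification recorded in Section~4. The reason no uniqueness for $\mu_s$ is claimed is exactly that the Hahn--Banach type extension from $\Aff(\mathcal S(E))$ to $\mbox{C}(\mathcal S(E))$ need not be unique; the uniqueness results of Theorems \ref{th:7.3'} and \ref{th:7.5'} do become available, but only after imposing the simplex respectively Bauer simplex hypothesis that we are deliberately dropping here. If one wished, one could in addition invoke the Krein--Mil'man based barycentric representation recalled in Section~4 (or, stronger still, pass to a maximal measure dominating $\mu_s$ in the order $\prec$ of Section~5 and then apply Bishop--de~Leeuw) to force $\mu_s$ to be pseudo-supported on $\partial_e\mathcal S(E)$, but this strengthening is not needed for (6.3).
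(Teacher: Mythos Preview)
Your argument is correct, and in fact more direct than the paper's. Both proofs begin identically: invoke Theorem~\ref{th:7.2'} to pass from $s$ to the unique state $\tilde s$ on $(\Aff(\mathcal S(E)),1)$ and then to an extension $s'$ (your $\sigma_s$) on $(\mbox{C}(\mathcal S(E)),1)$. At that point you apply the Riesz representation theorem (4.1) straight away to $s'$, obtaining $\mu_s\in\mathcal M_1^+(\mathcal S(E))$ and reading off (6.3). The paper instead stays within its own framework: it applies the already-proved Bauer-simplex result, Theorem~\ref{th:7.5'}, to the interval effect algebra $\Gamma(\mbox{C}(\mathcal S(E)),1)$, whose state space $\Omega=\mathcal S(\mbox{C}(\mathcal S(E)),1)\cong\mathcal M_1^+(\mathcal S(E))$ is a Bauer simplex; this yields a measure $\nu_s$ on $\mathcal B(\Omega)$ supported on $\partial_e\Omega$, which is then pushed back to $\mathcal B(\mathcal S(E))$ via the homeomorphism $\epsilon:\mathcal S(E)\to\partial_e\Omega$ of (4.3) and a change-of-variables computation.

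Your route is more economical: it uses only the Riesz theorem, whereas the paper's detour through Theorem~\ref{th:7.5'} ultimately rests on Choquet--Meyer (via Theorem~\ref{th:7.3'}), which is strictly deeper machinery than the bare statement (6.3) requires. The paper's approach does have the virtue of being self-contained relative to the results already established in Section~6, and its invocation of the Bauer simplex $\Omega$ makes explicit the sense in which every state is an ``integral over some Bauer simplex'' promised in the paragraph preceding the theorem. But for the theorem exactly as stated, your argument suffices and is cleaner.
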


\begin{proof}
The set $\mathcal S(E)$ is non-void and a compact convex Hausdorff
topological space. Given a state $s\in \mathcal S(E),$ by Theorem
\ref{th:7.2'}, $s$ can be uniquely extended to a state $\tilde s$ on
$(\Aff(S(E)),1).$  Since $(\Aff(\mathcal S(E)),1))$ is a unital
po-group that is a subgroup of the unital group $(\mbox{C}(\mathcal
S(E)),1),$ but $(\mbox{C}(\mathcal S(E)),1)$ is an $\ell$-group.
Therefore, this state $\tilde s$ can be by \cite[Cor 4.3]{Goo}
extended to a state $s'$ on the unital $\ell$-group
$\mbox{C}(\mathcal S(E),1).$ But $(\mbox{C}(\mathcal S(E)),1)$
satisfies interpolation and $\Gamma(\mbox{C}(\mathcal S(E)),1)$ has
(BSP), therefore by Theorem \ref{th:7.5'}, there is a regular Borel
probability measure $\nu_s$ on the Bauer simplex $\Omega = \mathcal
S(\mbox{C}(\mathcal S(E)),1)$ such that

$$f(s') = \int_{\partial_e \Omega} f(x) \dx \nu_s(x),\quad f \in
\Gamma(\mbox{C}(\mathcal S(E)),1).\eqno(6.4)
$$

Let $\epsilon: \mathcal S(E) \to \partial_e \mathcal M^+_1(\mathcal
S(E))$ defined by (4.3). Then $\epsilon$ is a homeomorphism. For any
$a \in E$, by \cite[Cor 7.5]{Goo}, there is a unique continuous
affine mapping $\tilde a: \mathcal M^+_1(\mathcal S(E)) \to [0,1]$
such that $\hat a = \tilde a \circ \epsilon.$  Therefore, $\tilde
a\in \Gamma(\mbox{C}(\mathcal S(E)),1)$ and $s(a)= \hat a(s) =
\tilde a(\delta_s).$ The homeomorphism $\epsilon$ defines a regular
Borel measure $\mu_s:= \nu_s\circ \epsilon$ on $\mathcal {B(S}(E)),$
and using (6.4) and transformation of integrals, we have

\begin{eqnarray*}
s(a)=\tilde a(\epsilon (s)) &=& \int_{\partial_e \Omega} \tilde a(y)
\dx \nu_s(y)= \int_{\epsilon(\mathcal S(E))} \hat a\circ
\epsilon^{-1}(y) \dx\nu_s(y)\\
&=& \int_{\mathcal S(E)} \hat a(x) \dx \mu_s(x).
\end{eqnarray*}
This implies (6.3).
\end{proof}

\begin{remark}\label{re:1}
{\rm We note that  reasons why we do not know to guarantee the
uniqueness of $\mu_s$ in (6.3)  are facts that $\mathcal S(E)$ is
not necessarily a Bauer simplex nor a Choquet one, and
$\Aff(\mathcal S(E))$ is sup-norm closed in $\mbox{C}(\mathcal
S(E))$ so we cannot extend a state on $(\Aff(\mathcal S(E)),1)$ by
continuity to a state on $(\mbox{C}(\mathcal S(E)), 1).$}
\end{remark}

\begin{corollary}\label{co:7.6'} Let $E$ be an interval effect algebra with
{\rm (BSP)}.  The mapping $s \mapsto  \mu_s,$ where $\mu_s$ is a
unique regular Borel probability measure satisfying {\rm (6.2)}, is
an affine homeomorphism between $\mathcal S(E)$ and the set of
regular Borel probability measures on ${\mathcal B(S}(E))$ endowed
with the weak$^*$ topology. A state $s$ on $E$ is extremal if and
only if $\mu_s$ in {\rm (6.2)} is extremal. In such a case, $\mu_s =
\delta_s.$
\end{corollary}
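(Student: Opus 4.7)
Write $K := \mathcal S(E)$, which by (BSP) is a Bauer simplex, so $\partial_e K$ is compact Hausdorff; via the support property $\mu_s(\partial_e K)=1$ of Theorem \ref{th:7.5'}, I identify each $\mu_s$ with an element of $\mathcal M_1^+(\partial_e K)$. The plan is to verify, in turn, that $\Phi : s \mapsto \mu_s$ is a bijection, is affine, is a weak$^*$ homeomorphism, and respects extreme points.

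Injectivity and well-definedness come from Theorem \ref{th:7.5'}. For surjectivity, given $\mu \in \mathcal M_1^+(\partial_e K)$ I would set $s_\mu(a) := \int_{\partial_e K} \hat a \,\dx\mu$ for $a \in E$; since $\widehat{a+b} = \hat a + \hat b$ whenever $a+b$ is defined in $E$ and $\hat 1 \equiv 1$, $s_\mu$ is a state on $E$, and the uniqueness clause of Theorem \ref{th:7.5'} forces $\Phi(s_\mu) = \mu$. Affinity of $\Phi$ is the same uniqueness argument: for $s = \lambda s_1 + (1-\lambda) s_2$ the convex combination $\lambda\mu_{s_1} + (1-\lambda)\mu_{s_2}$ is supported on $\partial_e K$ and represents $s$ via (6.2), hence coincides with $\mu_s$.

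For continuity, I would invoke the defining property of a Bauer simplex: the restriction map $r : \Aff(K) \to \mbox{C}(\partial_e K)$ is a surjective isometric order-isomorphism, so every $g \in \mbox{C}(\partial_e K)$ extends uniquely to $\tilde g \in \Aff(K)$. From the Choquet--Meyer identity used in the proof of Theorem \ref{th:7.3'}, $\tilde g(s) = \int_K \tilde g \,\dx\mu_s = \int_{\partial_e K} g \,\dx\mu_s$ for every $s \in K$. Hence if $s_\alpha \to s$ weakly in $K$,
\[
\int_{\partial_e K} g \,\dx\mu_{s_\alpha} \;=\; \tilde g(s_\alpha) \;\longrightarrow\; \tilde g(s) \;=\; \int_{\partial_e K} g \,\dx\mu_s,
\]
giving $\mu_{s_\alpha} \to \mu_s$ in the weak$^*$ topology. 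Thus $\Phi$ is a continuous bijection from the compact Hausdorff space $K$ onto the Hausdorff space $\mathcal M_1^+(\partial_e K)$, hence a homeomorphism.

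For the extreme point statement, an affine homeomorphism carries extreme points to extreme points. By (4.3), the extreme points of $\mathcal M_1^+(\partial_e K)$ are exactly the Dirac measures $\{\delta_t : t \in \partial_e K\}$, so $s$ is extremal iff $\mu_s$ is extreme; in that case Bauer's characterization recalled before Theorem \ref{th:6.1} identifies $\mu_s$ with $\delta_s$. The step requiring the most care is the continuity argument, since one must transfer the integral representation (6.2) --- valid a priori only for evaluation functions $\hat a$, $a \in E$ --- to all of $\mbox{C}(\partial_e K)$; this is precisely what the Bauer restriction isomorphism $\Aff(K) \cong \mbox{C}(\partial_e K)$ accomplishes.
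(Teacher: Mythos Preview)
Your proof is correct and follows essentially the same overall strategy as the paper: injectivity and affinity from the uniqueness in Theorem~\ref{th:7.5'}, surjectivity by integrating against an arbitrary measure to produce a state, and preservation of extreme points from the affine homeomorphism together with the identification of extremal measures with Dirac measures.

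The one place where you and the paper diverge is the continuity argument. The paper simply invokes \cite[Thm~II.4.1(iii)]{Alf} for the continuity of $s\mapsto\mu_s$; you instead give a self-contained proof by exploiting the Bauer restriction isomorphism $r:\Aff(K)\to\mbox{C}(\partial_e K)$ to upgrade the barycentric formula from the functions $\hat a$ to all of $\mbox{C}(\partial_e K)$, and then read off weak$^*$ convergence directly. This is precisely the content behind the Alfsen reference, so the approaches are equivalent, but yours has the advantage of making explicit why the (BSP) hypothesis is exactly what is needed for continuity. Your identification of $\mu_s$ with an element of $\mathcal M_1^+(\partial_e K)$ (rather than of $\mathcal M_1^+(K)$, as the paper's statement literally reads) is also cleaner, and is justified by compactness of $\partial_e K$ and the support condition $\mu_s(\partial_e K)=1$.
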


\begin{proof}  The fact that that the mapping $s\mapsto \mu_s$ is
affine and injective follows from Theorem \ref{th:7.5'} and (6.2).
The surjectivity is clear because if $\mu$ is a regular Borel
probability measure, then  $\mu $ defines via (6.2) some state, $s,$
on $E.$ The continuity follows from \cite[Thm II.4.1(iii)]{Alf}.

Since every Dirac measure $\delta_s$ is always a regular Borel
probability measure, (6.2) entails that $s$ has to be extremal.
Conversely, if $s$ is extremal, the uniqueness of $\mu_s$ yields
that $\mu_s=\delta_s.$
\end{proof}

It is worthy to recall that Corollary \ref{co:7.6'} shows that every
(finitely additive) state on an interval effect algebra satisfying
(BSP)  is in a one-to-one correspondence with ($\sigma$-additive)
regular Borel probability measure on some Borel $\sigma$-algebra
trough formula (6.2). This observation is interesting because
according to de Finetti, a probability measure  is only  a finitely
additive measure, whilst by Kolmogorov \cite{Kol} a probability
measure is assumed to be $\sigma$-additive. (6.2) shows that there
is a natural coexistence between both approaches.

\vspace{2mm}

A famous result by Horn and Tarski \cite{HoTa} states that every
state on a  Boolean subalgebra $A$ of a Boolean algebra $B$ can be
extended to a state on $B,$ of course not in unique way. E.g. the
two-element Boolean subalgebra, $\{0,1\}$ has a unique state,
0-1-valued,  and it can be extended to many distinct  states on $B.$
This result was generalized for MV-algebras, see \cite[Thm 6]{Kro1}.
In what follows, we generalize this result also for interval effect
algebras, and this gives a variant of the Horn--Tarski Theorem.

\begin{theorem}\label{th:7.7}  Let $E$ be an interval effect algebra and $F$
its sub-effect algebra.  Then every state $s$ on $F$ can be extended
to a state on $E.$
\end{theorem}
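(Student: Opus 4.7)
The plan is to pass to the ambient unigroup of $E$, extend $s$ first to the subgroup of that unigroup generated by $F$, and then to the whole unigroup via a Hahn--Banach type theorem for po-groups; the desired state on $E$ is obtained by restriction.

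By Remark \ref{re:amb}, write $E = \Gamma(G, u)$ for a unigroup $(G, u)$. Let $H$ be the subgroup of $G$ generated by $F$, with the order inherited from $G$. Since $1 \in F$ corresponds to $u$ and $F \subseteq [0, u]$ with $u$ a strong unit of $G$, the element $u$ remains a strong unit of the directed po-group $(H, u)$. Assuming a state $\tilde s$ on $(H, u)$ extending $s$ is in hand, Goodearl's state-extension theorem \cite[Cor.~4.3]{Goo} yields a state $\hat s$ on $(G, u)$ extending $\tilde s$, whose restriction to $E = \Gamma(G,u)$ is the desired state; so the task reduces to producing $\tilde s$.

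The natural candidate is $\tilde s\bigl(\sum_i n_i f_i\bigr) := \sum_i n_i s(f_i)$ for $f_i \in F$ and $n_i \in \mathbb{Z}$. The main work is to show this is well-defined and positive: whenever $\sum_i n_i f_i = 0$ (respectively, $\ge 0$) in $G$, one needs $\sum_i n_i s(f_i) = 0$ (respectively, $\ge 0$). These implications require more than the additivity of $s$ on $F$, because $H$ may carry relations among elements of $F$ that are invisible in the partial-algebra structure of $F$ alone; this is the main obstacle. Two natural attacks are available. First, one may invoke the universal group $(U(F), u_F)$ of the effect algebra $F$: $s$ lifts uniquely to a state $s^\sharp$ on $(U(F), u_F)$, and the inclusion $F \hookrightarrow G$ induces a po-group homomorphism $\pi \colon U(F) \to G$ with image $H$ sending $u_F$ to $u$, so the task becomes verifying $\ker \pi \subseteq \ker s^\sharp$, allowing $s^\sharp$ to descend to $(H, u)$. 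Alternatively, one may define the Minkowski-type sublinear functional $p(g) := \inf\bigl\{\sum_i s(f_i) : g \le \sum_i f_i \text{ in } G,\ f_i \in F\bigr\}$, verify that $p$ is finite (using that $u \in F$ is a strong unit), sublinear, and satisfies $p(f) = s(f)$ for $f \in F$, and then apply a po-group Hahn--Banach theorem directly on $G$ to obtain a state dominated by $p$, bypassing $(H, u)$ altogether. Under either route, the one genuine difficulty is the consistency step ($\ker \pi \subseteq \ker s^\sharp$ in the first, $p(f) = s(f)$ on $F$ in the second); once it is secured, the remainder of the argument is a routine application of \cite[Cor.~4.3]{Goo} followed by restriction to $\Gamma(G,u)$.
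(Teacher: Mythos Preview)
Your proposal correctly identifies the overall strategy and pinpoints the crux of the argument, but it stops exactly where the real work begins: you explicitly leave the ``consistency step'' open in both approaches (a) and (b), so as written this is a proof outline rather than a proof.

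The paper's argument is essentially your approach (a), and it dispatches the consistency problem with one ingredient you did not invoke: by \cite[Cor.~1.4.5]{DvPu}, a sub-effect algebra of an interval effect algebra is itself an interval effect algebra, so $F$ possesses its own unigroup $(H',u')$. The universal property of the unigroup (Remark~\ref{re:amb}) then extends the state $s$ on $F$ \emph{uniquely and automatically} to a state $s''$ on $(H',u')$---no consistency issue arises at this stage, because that is precisely what the unigroup is designed to guarantee. The same universal property extends the inclusion $\iota\colon F\hookrightarrow E$ to a group homomorphism $\widehat{\iota}\colon (H',u')\to(G,u)$, and the paper asserts that $\widehat{\iota}$ is injective because $\iota$ is. This is strictly stronger than the condition $\ker\pi\subseteq\ker s^{\sharp}$ you were aiming for, and it resolves your obstacle in one stroke: $s''$ is transported along $\widehat{\iota}^{-1}$ to a state on the unital subgroup $(\widehat{\iota}(H'),u)$ of $(G,u)$, after which \cite[Cor.~4.3]{Goo} and restriction to $\Gamma(G,u)$ finish exactly as you describe. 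What your outline is missing, then, is not a different idea but the recognition that the induced map out of the unigroup of $F$ is an embedding---together with the citation that $F$ is interval, which is what makes that unigroup available in the first place. Your approach (b) could be made to work as well, but the verification $p(f)=s(f)$ on $F$ that you flag is again equivalent to this same embedding issue, so it does not sidestep the difficulty.
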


\begin{proof}  Let $E$ be an interval effect algebra and $F$ be its
subalgebra. According to \cite[Cor 1.4.5]{DvPu}, $F$ is also an
interval effect algebra.  According to Remark \ref{re:amb}, we can
assume that $E = \Gamma(G,u),$ where $u$ is a universal strong unit
for $E.$ Then $F\cong\Gamma(H',u'),$ where $H'$ is a po-group with a
universal strong unit $u'.$ Let $\iota_F$ be the embedding of $F$
into $E$ and let $\iota$ be an isomorphism of $\Gamma(H',u')$ onto
$F.$  Assume that $s$ is a state on $F.$  Then $s'= s\circ \iota$ is
a state on $\Gamma(H',u').$ Applying again Remark \ref{re:amb}, the
state $s'$ can be extended to a unique state, $s'',$ on
$\Gamma(H',u').$ On the other hand, the mapping $\iota$ can be
extended to a unique group homomorphism, $\widehat \iota,$  from
$(H',u')$ into $(G,u)$ such that $\widehat {\iota}(u')=u.$  Because
$\iota$ was injective, so is $\widehat \iota,$ and therefore,
$\widehat \iota$ is an embedding. This entails that on the unital
subgroup $(\widehat \iota(H'),u)$ we have a state $s'''=s''\circ
(\widehat i)^{-1}$ that is an extension of the original state $s$ on
$F.$ Applying \cite[Cor 4.3]{Goo}, we see that $s'''$ can be
extended to a state $\tilde s$ on $(G,u)$ so that, the restriction
of $\tilde s$ onto $E=\Gamma(G,u)$ is an extension of a state $s$ on
$F.$
\end{proof}

\begin{corollary}\label{co:7.8}  Let $E$ be an effect algebra
satisfying {\rm (RDP)} and  let $F$ be its subalgebra.  Then every
state   on $F$ can be extended to a state on $E.$
\end{corollary}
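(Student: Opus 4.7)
The plan is to derive this corollary as an immediate consequence of Theorem \ref{th:7.7}. The key observation is that the hypothesis ``$E$ satisfies (RDP)'' is strictly stronger than ``$E$ is an interval effect algebra'': by Ravindran's Theorem (Theorem \ref{th:2.1}), any effect algebra $E$ with (RDP) is isomorphic to $\Gamma(G,u)$ for some interpolation unital po-group $(G,u)$, which exhibits $E$ as an interval effect algebra. Hence the ambient algebra $E$ automatically fits into the framework of Theorem \ref{th:7.7}.

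Therefore, the proof is essentially a one-line reduction. First I would invoke Theorem \ref{th:2.1} to conclude that the given effect algebra $E$ satisfying (RDP) is an interval effect algebra. Next, since $F$ is a sub-effect algebra of this interval effect algebra, Theorem \ref{th:7.7} applies directly to the pair $(F,E)$: any state $s$ on $F$ extends to a state on $E$.

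There is no real obstacle here, since all the heavy lifting has been done in the proof of Theorem \ref{th:7.7}, which already handles the general interval case by passing through the unigroup of $E$, the unigroup of $F$ (guaranteed by \cite[Cor 1.4.5]{DvPu} which also confirms that $F$ is itself an interval effect algebra), the unique extension of $s$ to the unigroup of $F$, the embedding of unital po-groups supplied by Remark \ref{re:amb}, and finally the Goodearl extension \cite[Cor 4.3]{Goo} from a unital subgroup to the full unital interpolation group. The only thing worth verifying explicitly is that the embedding $F \hookrightarrow E$ respects the interval structure (i.e.\ sends the top element of $F$ to the top element of $E$, as is implicit in the definition of a sub-effect algebra), but this is automatic from the definition of a sub-effect algebra. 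Thus the corollary follows with no further argument.
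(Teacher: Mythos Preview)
Your proposal is correct and matches the paper's own proof almost verbatim: the paper simply observes that an effect algebra with (RDP) is an interval effect algebra (by Theorem \ref{th:2.1}) and then invokes Theorem \ref{th:7.7}. Your additional recap of the machinery inside Theorem \ref{th:7.7} is accurate but unnecessary here.
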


\begin{proof}  If $E$ satisfies (RDP), then $E$ is an interval
effect algebra.  The desired result now follows from  Theorem
\ref{th:7.7}.
\end{proof}

\section{Conclusion}

Using techniques of simplices and affine continuous functions on a
convex compact Hausdorff space, we have showed that every state on
an interval effect algebra, i.e. on an interval $[0,u]$ of a unital
po-group $(G,u)$,  can be represented by an integral, Theorems
\ref{th:7.3'}--\ref{th:7.7'}. A sufficient condition for this is the
condition that the state space, $\mathcal S(E),$ of an interval
effect algebra $E$ is a Choquet simplex, Theorem \ref{th:7.4'}. In
this case  it is even a Bauer simplex, Theorem \ref{th:7.5'}, and
there is a one-to-one correspondence between the set of all states
on $E$ and the set of all regular Borel probability measures on the
Borel $\sigma$-algebra $\mathcal {B(S}(E)),$ Corollary
\ref{co:7.6'}.

On the other side, an important example for mathematical foundations
of quantum mechanics,  the state space of the effect algebra
$\mathcal E(H),$ the set of all Hermitian operators  on a Hilbert
space $H$ that are between the zero operator and the identity, is
not a simplex, but also in this case we can represent every state as
an integral through a regular Borel probability measure over some
Bauer simplex, Theorem \ref{th:7.7'}, however in this case, the
uniqueness of a Borel probability measure is not more guaranteed.

In addition, we have proved a variant of the Horn--Tarski Theorem
that every state on a subalgebra of an unital effect algebra can be
extended to a state on the interval effect algebra, Theorem
\ref{th:7.7}.


\begin{thebibliography}{KoCh}

%\vspace{-2mm}
\bibitem[Alf]{Alf} E.M. Alfsen, {\it ``Compact Convex Sets and
Boundary Integrals",} Springer-Verlag, Berlin, 1971.

%\vspace{-2mm}
\bibitem[AlSc]{AlSc} E.M. Alfsen, F.W. Schultz,
{\it ``State Spaces of Operator Algebras",} Birhauser, 2001.





%\vspace{-2mm}
\bibitem[Cha]{Cha} C.C. Chang,
{\it Algebraic analysis of many valued logics,} Trans. Amer. Math.
Soc. {\bf 88} (1958), 467--490.

%\vspace{-2mm}
\bibitem[CDM]{CDM}
R. Cignoli, I.M.L. D'Ottaviano, D. Mundici, {\it ``Algebraic
Foundations of Many-valued Reasoning"},
 Kluwer Academic Publ.,
 Dordrecht, 2000.


%\vspace{-2mm}
\bibitem[DuSc]{DuSc}  N. Dunford, J.T. Schwartz, {\it ``Linear
Operators, Part I,"} Interscience, New York-London, 1958.

\bibitem[Dvu]{Dvu}  A. Dvure\v censkij,    {\it  ``Gleason's Theorem and Its
Applications"}, Kluwer Academic Publisher, Dordrecht/Boston/London,
1993,




\bibitem[DvVe1]{DvVe1} A. Dvure\v censkij, T. Vetterlein,   {\it
Pseudoeffect algebras. I. Basic properties,    } Inter. J. Theor.
Phys. {\bf 40} (2001), 685--701.


\bibitem[DvVe2]{DvVe2} A. Dvure\v censkij, T. Vetterlein,   {\it
Pseudoeffect algebras. II. Group representation,} Inter. J. Theor.
Phys. {\bf 40} (2001), 703--726.


%\vspace{-2mm}
\bibitem[DvPu]{DvPu}
A. Dvure\v censkij, S. Pulmannov\'a, {\it ``New Trends in Quantum
Structures"}, Kluwer Acad. Publ., Dordrecht, Ister Science,
Bratislava, 2000.



%\vspace{-2mm}
\bibitem[FoBe]{FoBe}  D.J. Foulis, M.K. Bennett,
{\it  Effect algebras and unsharp quantum logics}, Found. Phys. {\bf
24} (1994), 1325--1346.

\bibitem[Gle]{Gle}
A.M. Gleason, {\it Measures on the closed subspaces of a~Hilbert
space},   J. Math. Mech. {\bf 6} (1957), 885--893.



%\vspace{-2mm}
\bibitem[Goo]{Goo}
 K.R. Goodearl,
{\it ``Partially Ordered Abelian Groups with Interpolation",}
 Math. Surveys and Monographs No. 20, Amer. Math. Soc.,
 Providence, Rhode Island, 1986.

%\vspace{-2mm}
\bibitem[Hal]{Hal} P.R. Halmos, {\rm ``Measure Theory",} Springer-Verlag, New York,
Heidelberg, Berlin, {\bf  } 1988.

%\vspace{-2mm}
\bibitem[HoTa]{HoTa} A. Horn, A. Tarski, {\it Measures on Boolean
algebras,} Trans. Amer. Math. Soc. {\bf 64} (1948),  467--497.

\bibitem[Kol]{Kol} A.N. Kolmogorov,
 {\em  ``Grundbegriffe der Wahrscheinlichkeitsrechnung"},
 Berlin,   1933.



%\vspace{-2mm}
\bibitem[KoCh]{KoCh} F. K\^{o}pka, F. Chovanec, {\it D-posets,}  Math.
Slovaca {\bf 44} (1994), 21--34.



%\vspace{-2mm}
\bibitem[Kro]{Kro}
T. Kroupa,   {\it Every state on semisimple MV-algebra is integral.}
Fuzzy Sets and Systems  {\bf 157}  (2006), 2771--2782.

%\vspace{-2mm}
\bibitem[Kro1]{Kro1} T. Kroupa,   {\it Representation and extension of states on
MV-algebras.} Archive  Math. Logic
 {\bf 45}  (2006), 381--392.

\bibitem[LuZa]{LuZa}
W.A.J. Luxemburg, A.C. Zaanen, {\it ``Riesz Spaces, I"}
 North-Holland, Amsterdam, London, 1971.


%\vspace{-2mm}
\bibitem[Mun]{Mun} D. Mundici, {\it   Interpretation of AF $C^*$-algebras in \L
ukasiewicz sentential calculus,} J. Funct. Anal. {\bf 65} (1986),
15--63.

\bibitem[Pan]{Pan} G. Panti, {\it Invariant measures in free
MV-algebras,} Comm. Algebra  {\bf 36} (2008), 2849--2861.


%\vspace{-2mm}
\bibitem[Phe]{Phe}  R.R. Phelps, {\it ``Lectures on Choquet's
Theorem",} Van Nostrand, Princeton, 1965.


%\vspace{-2mm}
\bibitem[Rav]{Rav} K. Ravindran,
{\it  On a structure theory of effect algebras}, PhD thesis, Kansas
State Univ., Manhattan, Kansas, 1996.




\end{thebibliography}
\end{document}